\newcommand{\nc}{\newcommand}
\newcommand{\delete}[1]{}
\nc{\mlabel}[1]{\label{#1}}  % Use this to suppress names
\nc{\mcite}[1]{\cite{#1}}  % Use this to suppress names
\nc{\mref}[1]{\ref{#1}}  % Use this to suppress names
\nc{\mbibitem}[1]{\bibitem{#1}} % Use this to show number name
\nc{\mlabel}[1]{\label{#1}  % Use the next two lines to show names
{\hfill \hspace{1cm}{\small\tt{{\ }\hfill(#1)}}}}
\nc{\mcite}[1]{\cite{#1}{\small{\tt{{\ }(#1)}}}}  % Use this lines to show names
\nc{\mref}[1]{\ref{#1}{{\tt{{\ }(#1)}}}}  % Use this lines to show names
\nc{\mbibitem}[1]{\bibitem[\bf #1]{#1}} % Use this to show name
\newtheorem{theorem}{Theorem}[section]
\newtheorem{defn}[theorem]{Definition}
\newtheorem{lemma}[theorem]{Lemma}
\newtheorem{coro}[theorem]{Corollary}
\newtheorem{prop-def}{Proposition-Definition}[section]
\newtheorem{tempex}[theorem]{Example}
\newtheorem{tempexs}[theorem]{Examples}
\newtheorem{temprmk}[theorem]{Remark}
\newtheorem{tempexer}{Exercise}[section]
\nc{\vsa}{\vspace{-.1cm}} \nc{\vsb}{\vspace{-.2cm}}
\nc{\vsc}{\vspace{-.3cm}} \nc{\vsd}{\vspace{-.4cm}}
\nc{\vse}{\vspace{-.5cm}}
\nc{\UN}{U_{N}}
\nc{\FN}{F_{\mathrm N}}
\nc{\dfgen}{V} \nc{\dfrel}{R}
\nc{\dfgenb}{\vec{v}} \nc{\dfrelb}{\vec{r}}
\nc{\dfgene}{v} \nc{\dfrele}{r}
\nc{\dfop}{\odot}
\nc{\dfoa}{\dfop^{(1)}} \nc{\dfob}{\dfop^{(2)}}
\nc{\dfoc}{\dfop^{(3)}} \nc{\dfod}{\dfop^{(4)}}
\nc{\disp}[1]{\displaystyle{#1}}
\nc{\bin}[2]{ (_{\stackrel{\scs{#1}}{\scs{#2}}})}  %binomial coeff
\nc{\binc}[2]{ \left (\!\! \begin{array}{c} \scs{#1}\\
    \scs{#2} \end{array}\!\! \right )}  %binomial coeff
\nc{\bincc}[2]{  \left ( {\scs{#1} \atop
    \vspace{-.5cm}\scs{#2}} \right )}  %binomial coeff
\nc{\sarray}[2]{\begin{array}{c}#1 \vspace{.1cm}\\ \hline
    \vspace{-.35cm} \\ #2 \end{array}}
\nc{\bs}{\bar{S}} \nc{\ep}{\epsilon}
\nc{\dbigcup}{\stackrel{\bullet}{\bigcup}}
\nc{\la}{\longrightarrow} \nc{\cprod}{\ast} \nc{\rar}{\rightarrow}
\nc{\dar}{\downarrow} \nc{\labeq}[1]{\stackrel{#1}{=}}
\nc{\dap}[1]{\downarrow \rlap{$\scriptstyle{#1}$}}
\nc{\uap}[1]{\uparrow \rlap{$\scriptstyle{#1}$}}
\nc{\defeq}{\stackrel{\rm def}{=}} \nc{\dis}[1]{\displaystyle{#1}}
\nc{\dotcup}{\ \displaystyle{\bigcup^\bullet}\ }
\nc{\sdotcup}{\tiny{ \displaystyle{\bigcup^\bullet}\ }}
\nc{\fe}{\'{e}}
\nc{\hcm}{\ \hat{,}\ } \nc{\hcirc}{\hat{\circ}}
\nc{\hts}{\hat{\shpr}} \nc{\lts}{\stackrel{\leftarrow}{\shpr}}
\nc{\denshpr}{\den{\shpr}}
\nc{\rts}{\stackrel{\rightarrow}{\shpr}} \nc{\lleft}{[}
\nc{\lright}{]} \nc{\uni}[1]{\tilde{#1}} \nc{\free}[1]{\bar{#1}}
\nc{\freea}[1]{\tilde{#1}} \nc{\freev}[1]{\hat{#1}}
\nc{\dt}[1]{\hat{#1}}
\nc{\wor}[1]{\check{#1}}
\nc{\intg}[1]{F_C(#1)}
\nc{\den}[1]{\check{#1}} \nc{\lrpa}{\wr} \nc{\mprod}{\pm}
\nc{\dprod}{\ast_P} \nc{\curlyl}{\left \{ \begin{array}{c} {} \\
{} \end{array}
    \right .  \!\!\!\!\!\!\!}
\nc{\curlyr}{ \!\!\!\!\!\!\!
    \left . \begin{array}{c} {} \\ {} \end{array}
    \right \} }
\nc{\longmid}{\left | \begin{array}{c} {} \\ {} \end{array}
    \right . \!\!\!\!\!\!\!}
\nc{\lin}{\call} \nc{\ot}{\otimes}
\nc{\ora}[1]{\stackrel{#1}{\rar}}
\nc{\ola}[1]{\stackrel{#1}{\la}}%${\Bbb Z}$
\nc{\scs}[1]{\scriptstyle{#1}} \nc{\mrm}[1]{{\rm #1}}
\nc{\margin}[1]{\marginpar{\rm #1}}   %{\rm #1}}
\nc{\dirlim}{\displaystyle{\lim_{\longrightarrow}}\,}
\nc{\invlim}{\displaystyle{\lim_{\longleftarrow}}\,}
\nc{\mvp}{\vspace{0.5cm}}
\nc{\mult}{m}       %multiplication in bialgebra
\nc{\svp}{\vspace{2cm}} \nc{\vp}{\vspace{8cm}}
\nc{\proofbegin}{\noindent{\bf Proof: }}
\nc{\proofend}{$\blacksquare$ \vspace{0.5cm}}
\nc{\sha}{{\mbox{\cyr X}}}  %used to be \cyr
\nc{\ncsha}{{\mbox{\cyr X}^{\mathrm NC}}} \nc{\ncshao}{{\mbox{\cyr
X}^{\mathrm NC,\,0}}}
\nc{\shpr}{\diamond}    %Shuffle product
\nc{\shprc}{\shpr_c}
\nc{\shpro}{\diamond^0}    %Shuffle product
\nc{\shpru}{\check{\diamond}} \nc{\spr}{\cdot}
\nc{\catpr}{\diamond_l} \nc{\rcatpr}{\diamond_r}
\nc{\lapr}{\diamond_a} \nc{\lepr}{\diamond_e} \nc{\sprod}{\bullet}
\nc{\un}{u}                 %unit map in bialgebra
\nc{\vep}{\varepsilon} \nc{\labs}{\mid\!} \nc{\rabs}{\!\mid}
\nc{\hsha}{\widehat{\sha}} \nc{\lsha}{\stackrel{\leftarrow}{\sha}}
\nc{\rsha}{\stackrel{\rightarrow}{\sha}} \nc{\lc}{\lfloor}
\nc{\rc}{\rfloor}
\nc{\sqmon}[1]{\langle #1\rangle}
\nc{\altx}{\Lambda}
\nc{\vecT}{\vec{T}}
\nc{\piword}{{\mathfrak P}}
\nc{\mmbox}[1]{\mbox{\ #1\ }}
\nc{\ayb}{\mrm{AYB}} \nc{\mayb}{\mrm{mAYB}} \nc{\cyb}{\mrm{cyb}}
\nc{\ann}{\mrm{ann}} \nc{\Aut}{\mrm{Aut}} \nc{\cabqr}{\mrm{CABQR
}} \nc{\can}{\mrm{can}} \nc{\colim}{\mrm{colim}}
\nc{\Cont}{\mrm{Cont}} \nc{\rchar}{\mrm{char}}
\nc{\cok}{\mrm{coker}} \nc{\dtf}{{R-{\rm tf}}} \nc{\dtor}{{R-{\rm
tor}}}
\nc{\Div}{{\mrm Div}} \nc{\End}{\mrm{End}} \nc{\Ext}{\mrm{Ext}}
\nc{\FG}{\mrm{FG}} \nc{\Fil}{\mrm{Fil}} \nc{\Frob}{\mrm{Frob}}
\nc{\Gal}{\mrm{Gal}} \nc{\GL}{\mrm{GL}} \nc{\Hom}{\mrm{Hom}}
\nc{\hsr}{\mrm{H}} \nc{\hpol}{\mrm{HP}} \nc{\id}{\mrm{id}}
\nc{\im}{\mrm{im}} \nc{\incl}{\mrm{incl}} \nc{\Loday}{\mrm{ABQR}\
} \nc{\length}{\mrm{length}} \nc{\LR}{\mrm{LR}} \nc{\mchar}{\rm
char} \nc{\pmchar}{\partial\mchar} \nc{\map}{\mrm{Map}}
\nc{\MS}{\mrm{MS}} \nc{\OS}{\mrm{OS}} \nc{\NC}{\mrm{NC}}
\nc{\rba}{\rm{Rota-Baxter algebra}\xspace}
\nc{\rbas}{\rm{Rota-Baxter algebras}\xspace}
\nc{\rbw}{\rm{RBW}\xspace}
\nc{\rbws}{\rm{RBWs}\xspace}
\nc{\rbadj}{\rm{RB}\xspace}
\nc{\mpart}{\mrm{part}} \nc{\ql}{{\QQ_\ell}} \nc{\qp}{{\QQ_p}}
\nc{\rank}{\mrm{rank}} \nc{\rcot}{\mrm{cot}} \nc{\rdef}{\mrm{def}}
\nc{\rdiv}{{\rm div}} \nc{\rtf}{{\rm tf}} \nc{\rtor}{{\rm tor}}
\nc{\res}{\mrm{res}} \nc{\SL}{\mrm{SL}} \nc{\Spec}{\mrm{Spec}}
\nc{\tor}{\mrm{tor}} \nc{\Tr}{\mrm{Tr}}
\nc{\mtr}{\mrm{tr}}
\nc{\ab}{\mathbf{Ab}} \nc{\Alg}{\mathbf{Alg}}
\nc{\Bax}{\mathbf{CRB}} \nc{\Algo}{\mathbf{Alg}^0}
\nc{\cRB}{\mathbf{CRB}} \nc{\cRBo}{\mathbf{CRB}^0}
\nc{\RBo}{\mathbf{RB}^0} \nc{\BRB}{\mathbf{RB}}
\nc{\Dend}{\mathbf{DD}} \nc{\bfk}{{\bf k}} \nc{\bfone}{{\bf 1}}
\nc{\base}[1]{{a_{#1}}} \nc{\Cat}{\mathbf{Cat}}
 \nc{\NS}{\mathbf{NS}}
\nc{\NA}{\mathbf{NA}}
\nc{\ND}{\mathbf{ND}}
\nc{\Diff}{\mathbf{Diff}} \nc{\gap}{\marginpar{\bf
Incomplete}\noindent{\bf Incomplete!!}
    \svp}
\nc{\FMod}{\mathbf{FMod}} \nc{\Int}{\mathbf{Int}}
\nc{\Mon}{\mathbf{Mon}}
\nc{\RB}{\mathbf{RB}} \nc{\remarks}{\noindent{\bf Remarks: }}
\nc{\Rep}{\mathbf{Rep}} \nc{\Rings}{\mathbf{Rings}}
\nc{\Sets}{\mathbf{Sets}} \nc{\bfx}{\mathbf{x}}
\nc{\BA}{{\Bbb A}} \nc{\CC}{{\Bbb C}} \nc{\DD}{{\Bbb D}}
\nc{\EE}{{\Bbb E}} \nc{\FF}{{\Bbb F}} \nc{\GG}{{\Bbb G}}
\nc{\HH}{{\Bbb H}} \nc{\LL}{{\Bbb L}} \nc{\NN}{{\Bbb N}}
\nc{\QQ}{{\Bbb Q}} \nc{\RR}{{\Bbb R}} \nc{\TT}{{\Bbb T}}
\nc{\VV}{{\Bbb V}} \nc{\ZZ}{{\Bbb Z}}
\nc{\cala}{{\mathcal A}} \nc{\calc}{{\mathcal C}}
\nc{\cald}{{\mathcal D}} \nc{\cale}{{\mathcal E}}
\nc{\calf}{{\mathcal F}} \nc{\calg}{{\mathcal G}}
\nc{\calh}{{\mathcal H}} \nc{\cali}{{\mathcal I}}
\nc{\calj}{{\mathcal J}} \nc{\call}{{\mathcal L}}
\nc{\calm}{{\mathcal M}} \nc{\caln}{{\mathcal N}}
\nc{\calo}{{\mathcal O}} \nc{\calp}{{\mathcal P}}
\nc{\calr}{{\mathcal R}} \nc{\calt}{{\mathcal T}}
\nc{\calw}{{\mathcal W}} \nc{\calx}{{\mathcal X}}
\nc{\CA}{\mathcal{A}}
\nc{\frakA}{{\mathfrak A}}
\nc{\fraka}{{\mathfrak a}}
\nc{\frakB}{{\mathfrak B}}
\nc{\frakb}{{\mathfrak b}}
\nc{\frakd}{{\mathfrak d}}
\nc{\frakF}{{\mathfrak F}}
\nc{\frakg}{{\mathfrak g}}
\nc{\frakm}{{\mathfrak m}}
\nc{\frakM}{{\mathfrak M}}
\nc{\frakMo}{{\mathfrak M}^0}
\nc{\frakP}{{\mathfrak P}}
\nc{\frakp}{{\mathfrak p}}
\nc{\frakS}{{\mathfrak S}}
\nc{\frakSo}{{\mathfrak S}^0}
\nc{\fraks}{{\mathfrak s}}
\nc{\os}{\overline{\fraks}}
\nc{\frakT}{{\mathfrak T}}
\nc{\frakTo}{{\mathfrak T}^0}
\nc{\oT}{\overline{T}}
\nc{\frakX}{{\mathfrak X}}
\nc{\frakXo}{{\mathfrak X}^0}
\nc{\frakx}{{\mathbf x}}
\nc{\frakTx}{\frakT}      %All rooted trees, correspond to \ncsha(X)
\nc{\frakTa}{\frakT^a}        % rooted trees for \ncsha(A)
\nc{\frakTxo}{\frakTx^0}   % rooted trees for \ncshao(X)
\nc{\caltao}{\calt^{a,0}}   % rooted trees for \ncshao(A)
\nc{\ox}{\overline{\frakx}}
\nc{\fraky}{{\mathfrak y}}
\nc{\frakz}{{\mathfrak z}}
\nc{\oX}{\overline{X}}
\font\cyr=wncyr10 
\begin{document}

\nc{\tred}[1]{\textcolor{red}{#1}}
\nc{\li}[1]{\tred{Li:#1 }}

\title{Nijenhuis algebras, NS algebras and N-dendriform
algebras}

\author{Peng Lei}
\address{
Department of Mathematics, Lanzhou University,
Lanzhou, Gansu 730000, China}
\email{leip@lzu.edu.cn}

\author{Li Guo}
\address{
Department of Mathematics and Computer Science,
Rutgers University,
Newark, NJ 07102, USA}
\email{liguo@newark.rutgers.edu}

%\date{\today}

%\begin{document}

\begin{abstract}
In this paper we study (associative) Nijenhuis algebras, with emphasis
on the relationship between the category of Nijenhuis algebras and
the categories of NS algebras. This is in analogy
to the well-known theory of the adjoint functor from the category of
Lie algebras to that of associative algebras, and the more recent
results on the adjoint functor from the categories of dendriform and tridendriform algebras to that of Rota-Baxter algebras. We first give an
explicit construction of free Nijenhuis algebras and
then apply it to obtain the universal enveloping Nijenhuis algebra of an NS algebra. We further apply the construction to determine the binary quadratic nonsymmetric algebra, called the N-dendriform algebra, that is compatible with the Nijenhuis algebra. As it turns out, the N-dendriform algebra has more relations than the NS algebra.
\end{abstract}

\maketitle

\tableofcontents

\setcounter{section}{0}

\section{Introduction}
\mlabel{sec:intro}

Through the antisymmetry bracket $[x,y]:=xy-yx$, an associative algebra $A$ defines a Lie algebra structure on $A$.
The resulting functor from the category of associative algebras to that of Lie algebras and its adjoint functor have played a fundamental role in the study of these algebraic structures.
A similar relationship holds for Rota-Baxter algebras and dendriform algebras.

%On the other hand, a vector space $A$ with a binary operation $\cdot$ such that $[x,y]:=x\cdot y - y\cdot x, x, y\in A,$ defines a Lie bracket on $A$ is called a Lie-admissible algebra~\mcite{GR}. Such algebras include important algebras such as Vinberg algebras and pre-Lie algebras, as well as associative algebras.

This paper studies a similar relationship between (associative) Nijenhuis algebras and NS algebras.
\medskip

A {\bf Nijenhuis algebra} is a nonunitary associative algebra $N$ with a linear endomorphism $P$ satisfying the {\bf Nijenhuis equation}:

\begin{equation}
    P(x)P(y) = P(P(x)y) + P(xP(y)) - P^2(xy),\quad \forall x,y \in N.
    \mlabel{eq:Nij}
\end{equation}
The concept of a Nijenhuis operator on a Lie algebra originated from the important concept of a Nijenhuis tensor that was introduced by Nijenhuis~\mcite{N} in the study of pseudo-complex manifolds in the 1950s and was related to the well-known concepts of Schouten-Nijenhuis bracket, the Fr\"olicher-Nijenhuis bracket~\mcite{FN} and the Nijenhuis-Richardson bracket. Nijenhuis operator operators on a Lie algebra appeared in~\mcite{KM} in a more general study of Poisson-Nijenhuis manifolds and then more recently in~\mcite{GS1,GS2} in the context of the classical Yang-Baxter equation.

The Nijenhuis operator on an associative algebra was introduced by Carinena and coauthors~\mcite{CGM} to study quantum bi-Hamiltonian systems.
In~\mcite{Uc}, Nijenhuis operators are constructed by analogy with Poisson-Nijenhuis geometry, from relative Rota-Baxter operators.

Note the close analogue of the Nijenhuis operator with the more familiar {\bf Rota-Baxter operator} of weight $\lambda$ (where $\lambda$ is a constant) defined to be a linear endomorphism $P$ on an associative algebra $R$ satisfying
$$ P(x)P(y)=P(P(x)y) + P(xP(y)) +\lambda P(xy), \quad \forall x, y\in R.$$
The latter originated from the probability study of G. Baxter~\mcite{Ba}, was studied by Cartier and Rota and is closely related to the operator form of the classical Yang-Baxter equation. Its study has experienced a quite remarkable renascence in the last decade with many applications in mathematics and physics, most notably the work of Connes and Kreimer on renormalization of quantum field theory~\mcite{CK,EGK,EGM}. See~\mcite{Gub} for further details and references.

The recent theoretic developments of Nijenhuis algebras have largely followed those of Rota-Baxter algebras. Commutative Nijenhuis algebras were constructed in~\mcite{EF2,EL} following the construction of free commutative Rota-Baxter algebras~\mcite{GK1}.

Another development followed the relationship between Rota-Baxter algebras and dendriform algebras.
Recall that a {\bf dendriform algebra}, defined by Loday~\mcite{Lo1}, is a vector space $D$ with two binary operations $\prec$ and $\succ$ such that
$$(x\prec y)\prec z=x\prec(y\star z),\; (x\succ y)\prec z=x\succ(y\prec
z),\; (x\star y)\succ z=x\succ(y\succ z), x, y, z\in D,$$
where $\star:=\prec+\succ$. Similarly a {\bf tridendriform algebra}, defined by Loday and Ronco~\mcite{L-R1}, is a vector space $T$ with three binary operations $\prec, \succ$ and $\cdot$ that satisfy seven relations. Aguiar~\mcite{Ag3} showed that for a Rota-Baxter algebra $(R,P)$ of weight 0, the binary operations
$$ x \prec_Py:=xP(y), \quad x\succ_P y:=P(x)y, \quad \forall x, y\in R,$$
define a dendriform algebra on $R$. Similarly, Ebrahimi-Fard~\mcite{EF1} showed that, for a Rota-Baxter algebra $(R,P)$ of non-zero weight, the binary operations
$$ x \prec_Py:=xP(y), \quad x\succ_P y:=P(x)y, \quad x\cdot_P y:= \lambda xy, \quad \forall x, y\in R,$$
define a tridendriform algebra on $R$.

As an analogue of the tridendriform algebra, the concept of an {\bf NS algebra} was introduced by Leroux~\mcite{Le2}, to be a vector space $M$ with three binary
operations $\prec$, $\succ$ and $\bullet$ that satisfy four relations (see Eq.~(\mref{eq:ns})).
As an analogue of the Rota-Baxter algebra case, it was shown~\mcite{Le2} that, for a Nijenhuis algebra $(N,P)$, the binary operations
$$ x \prec_Py:=xP(y), \quad x\succ_P y:=P(x)y, \quad x\bullet_P y:= -P( xy), \quad \forall x, y\in R,$$
defines an NS algebra on $R$.

Considering the adjoint functor of the functor induced by the above mentioned map from Rota-Baxter algebras to (tri-)dendriform algebras, the Rota-Baxter universal enveloping algebra of a (tri-)dendriform algebra was constructed in~\mcite{EG2}. For this purpose, free Rota-Baxter algebras was first constructed.
\smallskip

In this paper we give a similar approach for Nijenhuis algebras, but we go beyond the case of Rota-Baxter algebras. Our first goal is to give an explicit construction of free Nijenhuis algebras in Section~\mref{sec:nonua}.
We consider both the cases when the free Nijenhuis algebra is generated by a set and by another algebra. Other than its role in the theoretical study of Nijenhuis algebras, this construction allows us to construct the universal enveloping algebra of an NS algebra. We achieve this in Section~\mref{sec:adj}.

Knowing that a Nijenhuis algebra gives an NS algebra, it is natural to ask what other dendriform type algebras that Nijenhuis algebras can give in a similar way. As a second application of our construction of free Nijenhuis algebras, we determine all ``quadratic nonsymmetric" relations that can be derived from Nijenhuis algebras and find that one can actually derive more relations than defined by the NS algebra in Eq.~(\mref{eq:ns}). This discussion is presented in Section~\mref{sec:sdn}. \smallskip

\noindent
{\bf Notation:} In this paper $\bfk$ is taken to be a field. A $\bfk$-algebra is taken to be nonunitary associative unless otherwise stated.

%%%
%%%%%%%%%%%%%%%%%%%%%%%%%%%%%%%%%%%%%%%%%%%%%%%%%%%%%%%%%%%
%%%

\section{Free Nijenhuis algebra on an algebra}
\mlabel{sec:nonua}
We start with the definition of free Nijenhuis algebras.
\begin{defn}
{\rm
Let $A$ be a $\bf k$-algebra. A free Nijenhuis
algebra over $A$ is a Nijenhuis algebra $\FN(A)$ with a
Nijenhuis operator $P_A$ and an algebra homomorphism $j_A:
A\to \FN(A)$ such that, for any  Nijenhuis algebra $N$ and
any  algebra homomorphism $f:A\to N$, there is a unique
 Nijenhuis algebra homomorphism $\free{f}: \FN(A)\to N$
such that $\free{f}\circ j_A=f$:
$$ \xymatrix{ A \ar[rr]^{j_A}\ar[drr]^{f} && \FN(A) \ar[d]_{\free{f}} \\
&& N}
$$
}
\mlabel{de:fna}
\end{defn}

For the construction of free Nijenhuis algebras, we follow the construction of free Rota-Baxter algebras~\mcite{EG2,Gub} by bracketed words. Alternatively, one can follow~\mcite{EG3} to give the construction by rooted trees that is more in the spirit of operads~\mcite{LV}. One can also follow the approach of Gr\"obner-Shirshov bases~\mcite{BCQ}.
Because of the lack of a uniform approach (see~\mcite{GSZ,GSZ2} for some recent attempts in this direction) and to be notationally self contained, we give some details.
We first display a $\bfk$-basis of the free Nijenhuis algebra in
terms of bracketed words in \S~\mref{ss:base}. The product on the free
Nijenhuis algebra is given in \S~\mref{ss:prodao} and the universal
property of the free Nijenhuis algebra is proved in \S~\mref{ss:proof}.

\subsection{A basis of the free Nijenhuis algebra}
\mlabel{ss:base} Let $A$ be a  $\bfk$-algebra with a
$\bfk$-basis $X$. We first display a $\bfk$-basis $\frakX_\infty$ of
$\FN(A)$ in terms of bracketed words from the alphabet set $X$.

Let $\lc$ and $\rc$ be symbols, called brackets, and let $X'=X\cup
\{\lc,\rc\}$. Let $M(X')$ denote the free semigroup generated by $X'$.

\begin{defn} (\mcite{EG2,Gub})
{\rm
Let $Y,Z$ be two subsets of $M(X')$. Define the {\bf alternating
product} of $Y$ and $Z$ to be \allowdisplaybreaks{
\begin{eqnarray}
\altx(Y,Z)&=&\Big( \bigcup_{r\geq 1} \big (Y\lc Z\rc \big)^r \Big)
\bigcup
    \Big(\bigcup_{r\geq 0} \big (Y\lc Z\rc \big)^r  Y\Big)
    \bigcup \Big( \bigcup_{r\geq 1} \big( \lc Z\rc Y \big )^r \Big)
 \bigcup \Big( \bigcup_{r\geq 0} \big (\lc Z\rc Y\big )^r \lc Z\rc \Big).
\mlabel{eq:wordsao}
\end{eqnarray}}
}
\mlabel{de:alt}
\end{defn}

We construct a sequence $\frakX_n$ of subsets of $M(X')$ by the
following recursion. Let $\frakX_0=X$ and, for $n\geq 0$, define
\allowdisplaybreaks{
\begin{equation}
\frakX_{n+1}=\altx(X,\frakX_n). \notag
\end{equation}
Further, define
\begin{eqnarray}
\frakX_\infty &=& \bigcup_{n\geq 0} \frakX_n = \dirlim \frakX_n.
\mlabel{eq:x3ao}
\end{eqnarray}}
Here the second equation in Eq. (\mref{eq:x3ao}) follows since
$\frakX_1\supseteq \frakX_0$ and, assuming $\frakX_n\supseteq
\frakX_{n-1}$, we have
$$\frakX_{n+1}=\altx(X,\frakX_n) \supseteq \altx(X,\frakX_{n-1})
    \supseteq \frakX_n.$$

By~\mcite{EG2,Gub} we have the disjoint union
\allowdisplaybreaks{
\begin{eqnarray}
\frakX_\infty & =&
    \Big( \bigsqcup_{r\geq 1} \big (X\lc \frakX_{\infty}\rc\big )^r \Big) \bigsqcup
    \Big(\bigsqcup_{r\geq 0} \big (X\lc \frakX_{\infty}\rc\big )^r  X\Big) \notag\\
 && \bigsqcup \Big( \bigsqcup_{r\geq 1} \big (\lc \frakX_{\infty}\rc X\big )^r \Big)
    \bigsqcup \Big( \bigsqcup_{r\geq 0} \big( \lc \frakX_{\infty}\rc X \big)^r
    \lc \frakX_{\infty}\rc \Big).
\mlabel{eq:words3}
\end{eqnarray}}
Further, every $\frakx\in \frakX_\infty$ has a unique decomposition
\begin{equation}
 \frakx=\frakx_1 \cdots  \frakx_b,
\mlabel{eq:st}
\end{equation}
where $\frakx_i$, $1\leq i\leq b$, is alternatively in $X$ or in
$\lc \frakX_\infty\rc$. This decomposition will be called the {\bf
standard decomposition} of $\frakx$.

For $\frakx$ in ${\frakX}_\infty$ with standard decomposition
$\frakx_1 \cdots  \frakx_b$, we define $b$ to be the {\bf breadth}
$b(\frakx)$ of $\frakx$, we define the {\bf head} $h(\frakx)$ of
$\frakx$ to be 0 (resp. 1) if $\frakx_1$ is in $X$ (resp. in $\lc
\frakX_\infty \rc$). Similarly define the {\bf tail} $t(\frakx)$ of
$\frakx$ to be 0 (resp. 1) if $\frakx_b$ is in $X$ (resp. in $\lc
\frakX_\infty \rc$).

\subsection{The product in a free Nijenhuis
algebra} \mlabel{ss:prodao} Let
$$\FN(A)=\bigoplus_{\frakx\in \frakX_\infty} \bfk \frakx.$$
We now define a product $\shpr$ on $\FN(A)$ by defining $\frakx\shpr
\frakx'\in \FN(A)$ for $\frakx,\frakx'\in \frakX_\infty$ and then
extending bilinearly. Roughly speaking, the product of $\frakx$ and
$\frakx'$ is defined to be the concatenation whenever $t(\frakx)\neq
h(\frakx')$. When $t(\frakx)=h(\frakx')$, the product is defined by
the product in $A$ or by the Nijenhuis relation in
Eq.~(\mref{eq:Nij}).

To be precise, we use induction on the sum $n:=d(\frakx)+d(\frakx')$
of the depths of $\frakx$ and $\frakx'$. Then $n\geq 0$. If $n=0$,
then $\frakx,\frakx'$ are in $X$ and so are in $A$ and we define
$\frakx\shpr \frakx'=\frakx \spr \frakx'\in A \subseteq \FN(A)$. Here
$\spr$ is the product in $A$.

Suppose $\frakx\shpr \frakx'$ have been defined for all
$\frakx,\frakx'\in \frakX_\infty$ with $n\geq k\geq 0$ and let
$\frakx, \frakx'\in \frakX_\infty$ with $n=k+1$.

First assume the breadth $b(\frakx)=b(\frakx')=1$. Then $\frakx$ and
$\frakx'$ are in $X$ or $\lc \frakX_\infty\rc$. Since $n=k+1$ is at least one, $\frakx$ and $\frakx'$ cannot be both in $X$.
We accordingly define
\begin{equation}
\frakx\shpr \frakx'=\left \{ \begin{array}{ll}
%\frakx \spr \frakx', & {\rm if\ } \frakx,\frakx'\in X,\\
\frakx \frakx', & {\rm if\ } \frakx\in X, \frakx'\in \lc \frakX_\infty\rc,\\
\frakx \frakx', & {\rm if\ } \frakx\in \lc \frakX_\infty\rc, \frakx'\in X,\\
\lc \lc \ox\rc \shpr \ox'\rc +\lc \ox \shpr \lc \ox'\rc \rc -\lc \lc
\ox \shpr \ox' \rc\rc, & {\rm if\ } \frakx=\lc \ox\rc, \frakx'=\lc
\ox'\rc \in \lc \frakX_\infty \rc.
\end{array} \right .
\mlabel{eq:shprod0}
\end{equation}
Here the product in the first and second case are by concatenation and in the third case is
by the induction hypothesis since for the three products on the
right hand side we have
\begin{eqnarray*}
d(\lc\ox \rc)+ d(\ox') &=& d(\lc \ox \rc)+d(\lc \ox' \rc)-1
= d(\frakx)+d(\frakx')-1,\\
d(\ox)+d(\lc \ox'\rc) &=& d(\lc \ox \rc)+d(\lc \ox'\rc)-1
= d(\frakx)+ d(\frakx')-1,\\
d(\ox)+ d(\ox') &=& d(\lc \ox \rc)-1+ d(\lc \ox' \rc)-1 =
d(\frakx)+d(\frakx')-2
\end{eqnarray*}
which are all less than or equal to $k$.

Now assume $b(\frakx)>1$ or $b(\frakx')>1$. Let
$\frakx=\frakx_1\cdots\frakx_b$ and
$\frakx'=\frakx'_1\cdots\frakx'_{b'}$ be the standard decompositions
from Eq.~(\mref{eq:st}). We then define
\begin{equation}
\frakx \shpr \frakx'= \frakx_1\cdots \frakx_{b-1}(\frakx_b\shpr
\frakx'_1)\,
    \frakx'_{2}\cdots \frakx'_{b'}
\end{equation}
where $\frakx_b\shpr \frakx'_1$ is defined by
Eq.~(\mref{eq:shprod0}) and the rest is given by concatenation. The concatenation is well-defined since by Eq.~(\mref{eq:shprod0}), we
have $h(\frakx_b)=h(\frakx_b\shpr \frakx'_1)$ and
$t(\frakx'_1)=t(\frakx_b\shpr \frakx'_1)$. Therefore,
$t(\frakx_{b-1})\neq h(\frakx_b\shpr \frakx'_1)$ and
$h(\frakx'_2)\neq t(\frakx_b\shpr \frakx'_1)$.
\medskip

We have the following simple properties of $\shpr$.
\begin{lemma} Let $\frakx,\frakx'\in \frakX_\infty$. We have the following
statements.
\begin{enumerate}
\item $h(\frakx)=h(\frakx\shpr \frakx')$ and $t(\frakx')=t(\frakx\shpr \frakx')$.
\mlabel{it:mat0}
\item If $t(\frakx)\neq h(\frakx')$, then
$\frakx \shpr \frakx' =\frakx \frakx'$ (concatenation).
\mlabel{it:mat1}
\item If $t(\frakx)\neq h(\frakx')$, then for any $\frakx''\in \frakX_\infty$,
$$(\frakx\frakx')\shpr \frakx'' =\frakx(\frakx' \shpr \frakx''), \quad
\frakx''\shpr (\frakx \frakx') =(\frakx'' \shpr \frakx) \frakx'.$$
\mlabel{it:mat2}
\end{enumerate}
\mlabel{lem:match}
\end{lemma}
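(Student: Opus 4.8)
The plan is to establish all three statements simultaneously by induction on $n := d(\frakx) + d(\frakx')$, mirroring the inductive structure used to define $\shpr$ in \S\ref{ss:prodao}. The base case $n = 0$ is immediate: then $\frakx, \frakx' \in X$, so $\frakx \shpr \frakx' = \frakx \spr \frakx' \in A$, and since elements of $A$ have head and tail $0$, statements (a)--(c) hold trivially (the hypothesis $t(\frakx) \neq h(\frakx')$ in (b), (c) fails here, so there is nothing to check for those). For the inductive step I would first reduce to the breadth-one case for $\frakx$ and $\frakx'$, since the general product is built by concatenating a single breadth-one product $\frakx_b \shpr \frakx'_1$ into the middle of a word; once (a) is known for that inner product, the outer concatenation manifestly preserves the head of $\frakx$ (coming from $\frakx_1$) and the tail of $\frakx'$ (coming from $\frakx'_{b'}$).

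So assume $b(\frakx) = b(\frakx') = 1$. Since $n \geq 1$, not both lie in $X$. In the two mixed cases ($\frakx \in X$, $\frakx' \in \lc \frakX_\infty \rc$, or vice versa) we have $\frakx \shpr \frakx' = \frakx\frakx'$ by definition, which proves (b), and reading off heads and tails of the concatenation gives (a). The one genuinely substantive sub-case is $\frakx = \lc \ox \rc$, $\frakx' = \lc \ox' \rc$, where
$$
\frakx \shpr \frakx' = \lc \lc \ox\rc \shpr \ox'\rc + \lc \ox \shpr \lc \ox'\rc \rc - \lc \lc \ox \shpr \ox'\rc \rc .
$$
Here $t(\frakx) = h(\frakx') = 1$, so (b) and (c) impose no constraint, and only (a) must be checked: each of the three terms on the right is a single bracketed word $\lc \, \cdot \, \rc$, hence has head $1$ and tail $1$, matching $h(\frakx) = t(\frakx') = 1$. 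For statement (c), I would handle it directly from the definition of the general product: if $t(\frakx) \neq h(\frakx')$ then $\frakx\frakx'$ is a legitimate standard-decomposition concatenation whose last letter is $\frakx'_{b'}$, so $(\frakx\frakx') \shpr \frakx'' = \frakx_1 \cdots \frakx'_{b'-1}(\frakx'_{b'} \shpr \frakx''_1)\,\frakx''_2 \cdots = \frakx \,(\frakx' \shpr \frakx'')$, reading off the definition on both sides; the symmetric identity $\frakx'' \shpr (\frakx\frakx') = (\frakx'' \shpr \frakx)\frakx'$ is analogous, using part (a) to see that the tail of $\frakx''_{b''} \shpr \frakx_1$ equals $t(\frakx_1)$, which still differs from $h(\frakx'_1)$ so no further interaction propagates.

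The main obstacle, such as it is, is purely bookkeeping: making sure the induction hypothesis is invoked at strictly smaller values of $d(\frakx) + d(\frakx')$ for each of the three terms in the recursive sub-case, and tracking heads and tails through the nested brackets. The depth bookkeeping is already supplied in the text immediately after Eq.~(\ref{eq:shprod0}) — the three displayed inequalities show the relevant sums are $\leq k$ — so I can cite those directly. No delicate estimate or clever construction is needed; the only care required is to phrase the statement (a) as part of the inductive hypothesis so that it is available when verifying the concatenation steps that define $\shpr$ on words of breadth $> 1$, and to notice that (b) and (c) are vacuous precisely in the case ($\frakx = \lc\ox\rc$, $\frakx' = \lc\ox'\rc$) where the product is not a plain concatenation.
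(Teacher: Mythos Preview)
Your argument is correct. The paper does not actually supply a proof of this lemma; it is presented as ``simple properties of $\shpr$'' and left to the reader, with the key breadth-one instance of part~(a) already asserted without justification in the paragraph immediately preceding the lemma (where it is invoked to show the general product is well-defined). Your induction on $d(\frakx)+d(\frakx')$ for the breadth-one case, followed by the reduction of the general case to the breadth-one inner product $\frakx_b \shpr \frakx'_1$, together with the direct reading of the definition for (b) and (c), is precisely the verification the paper omits.

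One small point of presentation: the reduction from arbitrary breadth to breadth one does not in general decrease $d(\frakx)+d(\frakx')$, so it is cleaner to phrase the argument as (i) prove (a) for breadth-one pairs by induction on depth sum, then (ii) deduce (a), (b), (c) for arbitrary breadth directly from the definition of $\shpr$ and the already-established breadth-one case, rather than folding both into a single induction on $n$. The content is the same as what you wrote; only the framing changes.
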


Extending $\shpr$ bilinearly, we obtain a binary operation
$$ \FN(A)\otimes \FN(A) \to \FN(A).$$
For $\frakx\in \frakX_\infty$, define
\begin{equation}
N_A(\frakx)=\lc \frakx \rc. \mlabel{eq:RBop}
\end{equation}
Obviously $\lc \frakx \rc$ is again in $\frakX_\infty$. Thus $N_A$
extends to a linear operator $N_A$ on $\FN(A)$. Let
$$j_X:X\to \frakX_\infty \to \FN(A)$$
be the natural injection which extends to an algebra injection
\begin{equation}
j_A: A \to \FN(A). \mlabel{eq:jo}
\end{equation}

The following is our first main result which will be proved in the
next subsection.
\begin{theorem}
Let $A$ be a  $\bfk$-algebra with a $\bfk$-basis $X$.
\begin{enumerate}
\item
The pair $(\FN(A),\shpr)$ is an algebra.
\mlabel{it:alg}
\item
The triple $(\FN(A),\shpr,N_A)$ is a Nijenhuis algebra.
\mlabel{it:RB}
\item
The quadruple $(\FN(A),\shpr,N_A,j_A)$ is the free
Nijenhuis algebra on the algebra $A$. \mlabel{it:free}
\end{enumerate}
\mlabel{thm:freeao}
\end{theorem}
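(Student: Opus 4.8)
The plan is to verify the three statements in order, with associativity (part \mref{it:alg}) being the technical heart of the argument. For part \mref{it:alg}, I would prove that $(\frakx\shpr\fraky)\shpr\frakz=\frakx\shpr(\fraky\shpr\frakz)$ for all $\frakx,\fraky,\frakz\in\frakX_\infty$ by induction on the total depth $n=d(\frakx)+d(\fraky)+d(\frakz)$, and within a fixed $n$, a secondary induction on the total breadth $b(\frakx)+b(\fraky)+b(\frakz)$. The base case $n=0$ reduces to associativity of $\spr$ in $A$. For the breadth reduction, Lemma~\mref{lem:match}\mref{it:mat1}--\mref{it:mat2} lets me peel off leading/trailing factors whenever $t\neq h$ at a junction, so the genuinely new work occurs only when all three elements have breadth one and the relevant tails and heads match. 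Since the total depth is positive, not all three can lie in $X$; by the head/tail bookkeeping the only configuration forcing a real computation is when consecutive factors are both bracketed, i.e.\ essentially $\frakx=\lc\ox\rc$, $\fraky=\lc\oy\rc$, $\frakz=\lc\oz\rc$. Here I expand both $(\frakx\shpr\fraky)\shpr\frakz$ and $\frakx\shpr(\fraky\shpr\frakz)$ using the third branch of Eq.~(\mref{eq:shprod0}), obtaining on each side a $\bfk$-linear combination of bracketed expressions in $\ox,\oy,\oz$ of strictly smaller depth, and then match the two sides using the inductive associativity hypothesis. This matching is exactly the associative-algebra analogue of the computation showing that the Nijenhuis identity is "self-consistent"; it is a finite identity among the terms generated by repeatedly applying the three-term rule, and I expect this to be the main obstacle — keeping track of the terms and their signs, and making sure each term invoked falls under the induction hypothesis (which the depth inequalities displayed after Eq.~(\mref{eq:shprod0}) guarantee).

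For part \mref{it:RB}, I must check that $N_A$ satisfies the Nijenhuis equation~(\mref{eq:Nij}) on $\FN(A)$. It suffices to check it on basis elements $\frakx,\fraky\in\frakX_\infty$. By definition $N_A(\frakx)=\lc\frakx\rc$ and $N_A(\fraky)=\lc\fraky\rc$ are both in $\lc\frakX_\infty\rc$, so $N_A(\frakx)\shpr N_A(\fraky)$ falls precisely into the third case of Eq.~(\mref{eq:shprod0}), which gives
\[
N_A(\frakx)\shpr N_A(\fraky)=\lc\lc\frakx\rc\shpr\fraky\rc+\lc\frakx\shpr\lc\fraky\rc\rc-\lc\lc\frakx\shpr\fraky\rc\rc
= N_A(N_A(\frakx)\shpr\fraky)+N_A(\frakx\shpr N_A(\fraky))-N_A^2(\frakx\shpr\fraky).
\]
Thus Eq.~(\mref{eq:Nij}) holds on generators and, by bilinearity, on all of $\FN(A)$; so $(\FN(A),\shpr,N_A)$ is a Nijenhuis algebra. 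Note that this step is essentially built into the very definition of the product, which is why no real work is needed here beyond unwinding notation.

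For part \mref{it:free}, I verify the universal property. Given a Nijenhuis algebra $(N,P)$ and an algebra homomorphism $f:A\to N$, I define $\free{f}:\FN(A)\to N$ on basis elements by induction on depth: $\free{f}(\frakx)=f(\frakx)$ for $\frakx\in X$, $\free{f}(\lc\ox\rc)=P(\free{f}(\ox))$, and $\free{f}(\frakx_1\cdots\frakx_b)=\free{f}(\frakx_1)\cdots\free{f}(\frakx_b)$ using the standard decomposition and the product of $N$; then extend linearly. Uniqueness is immediate since any Nijenhuis algebra homomorphism extending $f$ is forced to satisfy these three rules, and every element of $\frakX_\infty$ is built from $X$ by bracketing and concatenation. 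For existence I must check $\free{f}$ is an algebra homomorphism, i.e.\ $\free{f}(\frakx\shpr\fraky)=\free{f}(\frakx)\free{f}(\fraky)$; this is another induction on $d(\frakx)+d(\fraky)$ and then on $b(\frakx)+b(\fraky)$, where the breadth-one/bracketed-both case uses precisely that $P$ satisfies the Nijenhuis equation in $N$ to convert the three-term right-hand side of Eq.~(\mref{eq:shprod0}), after applying $\free{f}$ and the inductive hypothesis, into $P(\free{f}(\ox))P(\free{f}(\oy))=\free{f}(\frakx)\free{f}(\fraky)$. That $\free{f}\circ j_A=f$ and that $\free{f}$ intertwines $N_A$ with $P$ are then clear from the defining formulas.
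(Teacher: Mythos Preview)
Your proposal is correct and follows essentially the same approach as the paper: induction on total depth with a breadth reduction via Lemma~\mref{lem:match} (the paper isolates this as a separate lemma), reducing part~\mref{it:alg} to the all-bracketed breadth-one case where both sides expand to eleven terms that match after applying the inductive hypothesis; parts~\mref{it:RB} and~\mref{it:free} are handled exactly as you describe. The only thing you leave implicit is the explicit bookkeeping of those eleven terms and the permutation matching them, which is indeed routine but is the one place where a sign or term could be dropped.
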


The following corollary of the theorem will be used later in the
paper.
\begin{coro}
Let $M$ be a $\bfk$-module and let $T(M)=\bigoplus_{n\geq 1} M^{\ot
n}$ be the reduced tensor algebra over $M$. Then $\FN(T(M))$, together with
the natural injection $i_M: M\to T(M) \xrightarrow{j_{T(M)}}
\FN(T(M))$, is a free  Nijenhuis algebra over $M$, in the
sense that, for any  Nijenhuis algebra $N$ and
$\bfk$-module map $f: M\to N$ there is a unique  Nijenhuis
algebra homomorphism $\freev{f}: \FN(T(M)) \to N$ such that
$\freev{f} \circ k_M = f$. \mlabel{co:vecfree}
\end{coro}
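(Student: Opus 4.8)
The plan is to deduce Corollary~\ref{co:vecfree} from Theorem~\ref{thm:freeao} by composing the free-algebra adjunction for the tensor algebra with the free-Nijenhuis-algebra adjunction. First I would recall that $T(M)=\bigoplus_{n\geq 1}M^{\ot n}$ is the free (nonunitary associative) $\bfk$-algebra on the $\bfk$-module $M$; that is, the canonical inclusion $\iota_M\colon M\to T(M)$ has the property that every $\bfk$-module map $f\colon M\to A$ into a $\bfk$-algebra $A$ extends uniquely to an algebra homomorphism $\tilde f\colon T(M)\to A$ with $\tilde f\circ \iota_M=f$. This is standard, so I would simply cite it. Then $k_M:=i_M$ is by definition the composite $M\xrightarrow{\iota_M}T(M)\xrightarrow{j_{T(M)}}\FN(T(M))$, where $j_{T(M)}$ is the algebra homomorphism from Theorem~\ref{thm:freeao}(\ref{it:free}).

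Next I would verify the universal property. Let $N$ be any Nijenhuis algebra and $f\colon M\to N$ a $\bfk$-module map. Since the underlying structure of $N$ is a $\bfk$-algebra, the universal property of $T(M)$ gives a unique algebra homomorphism $\tilde f\colon T(M)\to N$ with $\tilde f\circ\iota_M=f$. Now $\tilde f$ is an algebra homomorphism into the Nijenhuis algebra $N$, so by Theorem~\ref{thm:freeao}(\ref{it:free}) there is a unique Nijenhuis algebra homomorphism $\free{\tilde f}\colon \FN(T(M))\to N$ with $\free{\tilde f}\circ j_{T(M)}=\tilde f$. Setting $\freev f:=\free{\tilde f}$, we get $\freev f\circ k_M=\free{\tilde f}\circ j_{T(M)}\circ\iota_M=\tilde f\circ\iota_M=f$, which establishes existence.

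For uniqueness, suppose $g\colon \FN(T(M))\to N$ is any Nijenhuis algebra homomorphism with $g\circ k_M=f$. Then $g\circ j_{T(M)}\colon T(M)\to N$ is an algebra homomorphism satisfying $(g\circ j_{T(M)})\circ\iota_M=g\circ k_M=f$, so by the uniqueness clause of the universal property of $T(M)$ we get $g\circ j_{T(M)}=\tilde f$. But then $g$ is a Nijenhuis algebra homomorphism $\FN(T(M))\to N$ with $g\circ j_{T(M)}=\tilde f$, so the uniqueness clause of Theorem~\ref{thm:freeao}(\ref{it:free}) forces $g=\free{\tilde f}=\freev f$. This completes the argument.

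There is essentially no hard step here; the corollary is a formal composition of two adjunctions (equivalently, the statement that a composite of left adjoints is a left adjoint). The only point requiring a moment's care is to be explicit that $k_M$ factors as $j_{T(M)}\circ\iota_M$ and that the inner map $\iota_M$ is a $\bfk$-module map while $j_{T(M)}$ is an algebra map, so that both universal properties apply at the right level; once this bookkeeping is set up, existence and uniqueness each reduce to one application of each universal property. If one wished, one could also phrase the whole proof diagrammatically by pasting the two triangles (the $T(M)$ triangle in $\bfk$-modules and the $\FN(T(M))$ triangle in Nijenhuis algebras) along the common map $\tilde f$, but the elementwise verification above is shortest.
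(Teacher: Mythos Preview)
Your proof is correct and follows exactly the same idea as the paper's: the corollary is obtained by composing the universal property of the free algebra $T(M)$ on a module with the universal property of the free Nijenhuis algebra $\FN(A)$ on an algebra from Theorem~\ref{thm:freeao}. The paper states this in one sentence as the composition of left adjoints, while you spell out the existence and uniqueness verifications explicitly; the content is identical.
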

\begin{proof}
This follows immediately from Theorem~\mref{thm:freeao} and the fact that the construction of the free algebra on a module (resp. free Nijenhuis algebra on an algebra, resp. free Nijenhuis on a module) is the left adjoint functor of the forgetful functor from algebras to modules (resp. from Nijenhuis algebras to algebras, resp. from Nijenhuis algebras to modules), and the fact that the composition of two left adjoint functors is the left adjoint functor of the composition.
\end{proof}

\subsection{The proof of Theorem~\mref{thm:freeao}}
\mlabel{ss:proof}
\begin{proof}
\mref{it:alg}. We just need to verify the associativity. For this we
only need to verify
\begin{equation}
 (\frakx'\shpr \frakx'')\shpr \frakx''' =\frakx'\shpr(\frakx'' \shpr \frakx''')
\mlabel{eq:assx}
\end{equation}
for $\frakx',\frakx'',\frakx'''\in \frakX_\infty$. We will do this
by induction on the sum of the depths
$n:=d(\frakx')+d(\frakx'')+d(\frakx''')$. If $n=0$, then all of
$\frakx',\frakx'',\frakx'''$ have depth zero and so are in $X$. In
this case the product $\shpr$ is given by the product $\spr$ in $A$
and so is associative.

Assume the associativity holds for $n\leq k$ and assume that
$\frakx',\frakx'',\frakx'''\in \frakX_\infty$ have
$n=d(\frakx')+d(\frakx'')+d(\frakx''')=k+1.$

If $t(\frakx')\neq h(\frakx'')$, then by Lemma~\mref{lem:match},
$$ (\frakx' \shpr \frakx'') \shpr \frakx'''=(\frakx'\frakx'')\shpr \frakx'''
= \frakx' (\frakx'' \shpr \frakx''') =\frakx'\shpr (\frakx''\shpr
\frakx''').$$
A similar argument holds when $t(\frakx'')\neq h(\frakx''')$.

Thus we only need to verify the associativity when
$t(\frakx')=h(\frakx'')$ and $t(\frakx'')=h(\frakx''')$. We next
reduce the breadths of the words.

\begin{lemma}
If the associativity
$$(\frakx' \shpr \frakx'')\shpr \frakx'''=
 \frakx'\shpr (\frakx'' \shpr \frakx''') $$
holds for all $\frakx', \frakx''$ and $\frakx'''$ in $\frakX_\infty$
of breadth one, then it holds for all $\frakx', \frakx''$ and
$\frakx'''$ in $\frakX_\infty$. \mlabel{lem:ell}
\end{lemma}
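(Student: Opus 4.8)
The plan is to argue by induction on the total breadth $b(\frakx')+b(\frakx'')+b(\frakx''')$, at each step peeling one letter off a standard decomposition and using Lemma~\mref{lem:match} to rearrange the resulting concatenations. The base case is total breadth $3$, i.e.\ all three words of breadth one, which is exactly the hypothesis.

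For the inductive step I would first dispose of the case $t(\frakx')\neq h(\frakx'')$ or $t(\frakx'')\neq h(\frakx''')$. Here parts \mref{it:mat1} and \mref{it:mat2} of Lemma~\mref{lem:match}, together with part \mref{it:mat0} (which says that forming a $\shpr$-product changes neither head nor tail), make both sides collapse to the same concatenated word; for instance if $t(\frakx')\neq h(\frakx'')$ then
$$(\frakx'\shpr \frakx'')\shpr \frakx''' =(\frakx'\frakx'')\shpr \frakx''' =\frakx'(\frakx''\shpr\frakx''') =\frakx'\shpr(\frakx''\shpr\frakx'''),$$
the last equality because $t(\frakx')\neq h(\frakx'')=h(\frakx''\shpr\frakx''')$. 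So one may assume $t(\frakx')=h(\frakx'')$ and $t(\frakx'')=h(\frakx''')$, with at least one of the three words of breadth $>1$.

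If $b(\frakx')>1$, write $\frakx'=\frakx'_1 v$ with $v=\frakx'_2\cdots\frakx'_{b(\frakx')}$; this is a genuine concatenation since $t(\frakx'_1)\neq h(v)$. Repeated use of Lemma~\mref{lem:match}\,\mref{it:mat2} — together with \mref{it:mat0} to see that $\frakx'_1(v\shpr\frakx'')$ and $\frakx'_1\bigl(v\shpr(\frakx''\shpr\frakx''')\bigr)$ are again honest concatenations — reduces both sides to $\frakx'_1$ concatenated with $(v\shpr\frakx'')\shpr\frakx'''$, respectively with $v\shpr(\frakx''\shpr\frakx''')$, and these agree by the inductive hypothesis applied to $v,\frakx'',\frakx'''$. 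The case $b(\frakx''')>1$ is symmetric, peeling the last letter off $\frakx'''$. In the remaining case $b(\frakx')=b(\frakx''')=1$ and $b(\frakx'')>1$, write $\frakx''=uv$ (concatenation); a direct computation using only Lemma~\mref{lem:match}\,\mref{it:mat2} shows both sides equal $(\frakx'\shpr u)(v\shpr\frakx''')$, so no induction is needed there.

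I do not expect a genuine obstacle: the point of the lemma is simply that $\shpr$ behaves like concatenation away from the matching locus, which is precisely the content of Lemma~\mref{lem:match}, so the argument is bookkeeping. The only thing requiring care is to track heads and tails at each step, so that the intermediate expressions really are concatenations to which Lemma~\mref{lem:match}\,\mref{it:mat2} applies. The substantive input to associativity — the breadth-one case, where Eq.~(\mref{eq:shprod0}), and hence the Nijenhuis relation~(\mref{eq:Nij}), actually enters — is deliberately isolated outside this lemma and treated separately.
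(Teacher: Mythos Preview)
Your proposal is correct and follows essentially the same approach as the paper: induction on total breadth, peeling off one factor of a standard decomposition via Lemma~\mref{lem:match}, and in the middle case $b(\frakx'')>1$ showing both sides equal the common concatenation $(\frakx'\shpr u)(v\shpr\frakx''')$. The only organizational difference is that you dispose of the mismatched head/tail case inside the lemma's inductive step, whereas the paper handles that case in the surrounding depth induction before invoking the lemma; this is harmless and arguably makes the lemma more self-contained.
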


\begin{proof}
We use induction on the sum of breadths
$m:=b(\frakx')+b(\frakx'')+b(\frakx''')$. Then $m\geq 3$. The case
when $m=3$ is the assumption of the lemma. Assume the associativity
holds for $3\leq m \leq j$ and take $\frakx', \frakx'',\frakx'''\in
\frakX_\infty$ with $m = j+1.$ Then $j+1\geq 4$. So at least one of
$\frakx',\frakx'',\frakx'''$ have breadth greater than or equal to
2.

First assume $b(\frakx')\geq 2$. Then $\frakx'=\frakx'_1\frakx'_2$
with $\frakx'_1,\, \frakx'_2\in \frakX_\infty$ and $t(\frakx'_1)\neq
h(\frakx'_2)$. Thus by Lemma~\mref{lem:match}, we obtain
$$
 (\frakx'\shpr \frakx'') \shpr \frakx'''=
((\frakx'_1\frakx'_2)\shpr \frakx'')\shpr \frakx'''
= (\frakx'_1 (\frakx'_2 \shpr \frakx''))\shpr \frakx'''
= \frakx'_1 ((\frakx'_2 \shpr \frakx'') \shpr \frakx'''). $$
Similarly,
$$ \frakx'\shpr (\frakx'' \shpr \frakx''')=
(\frakx'_1\frakx'_2)\shpr (\frakx''\shpr \frakx''')
= \frakx'_1 (\frakx'_2 \shpr (\frakx''\shpr \frakx''')).
$$
Thus
$ (\frakx'\shpr \frakx'') \shpr \frakx'''=
  \frakx'\shpr (\frakx'' \shpr \frakx''')$
whenever
$ (\frakx'_2 \shpr \frakx'') \shpr \frakx'''=
\frakx'_2 \shpr (\frakx''\shpr \frakx''').$
The latter follows from the induction hypothesis.
A similar proof works if $b(\frakx''')\geq 2.$

Finally if $b(\frakx'')\geq 2$, then $\frakx''=\frakx''_1\frakx''_2$
with $\frakx''_1,\,\frakx''_2\in \frakX_\infty$ and
$t(\frakx''_1)\neq h(\frakx''_2)$. By applying Lemma~\mref{lem:match}
repeatedly, we obtain

$$
(\frakx' \shpr \frakx'')\shpr \frakx'''=
(\frakx' \shpr (\frakx''_1 \frakx''_2)) \shpr \frakx''' \\
= ((\frakx' \shpr \frakx''_1)\frakx''_2)\shpr \frakx'''
= (\frakx'\shpr \frakx''_1)(\frakx''_2 \shpr \frakx''').
$$
In the same way, we have
$$(\frakx'\shpr \frakx''_1)(\frakx''_2 \shpr \frakx''')
= \frakx'\shpr (\frakx'' \shpr \frakx''').$$ This again proves the
associativity.
\end{proof}

To summarize, our proof of the associativity has been reduced to the
special case when $\frakx',\frakx'',\frakx'''\in \frakX_\infty$ are
chosen so that
\begin{enumerate}
\item
$n:= d(\frakx')+d(\frakx'')+d(\frakx''')=k+1\geq 1$ with the
assumption that the associativity holds when $n\leq k$.
\mlabel{it:sp1}
\item
the elements have breadth one and \mlabel{it:sp2}
\item
$t(\frakx')=h(\frakx'')$ and $t(\frakx'')=h(\frakx''')$.
\mlabel{it:sp3}
\end{enumerate}
By item \mref{it:sp2}, the head and tail of each of the elements are
the same. Therefore by item \mref{it:sp3}, either all the three
elements are in $X$ or they are all in $\lc \frakX_\infty \rc$. If
all of $\frakx',\frakx'',\frakx'''$ are in $X$, then as already
shown, the associativity follows from the associativity in $A$.

So it remains to consider the case when $\frakx',\frakx'',\frakx'''$ are all in $\lc
\frakX_\infty \rc$. Then $\frakx'=\lc \ox'\rc, \frakx''=\lc \ox''
\rc, \frakx'''=\lc \ox'''\rc$ with $\ox',\ox'',\ox'''\in
\frakX_\infty$. Using Eq.~(\mref{eq:shprod0}) and bilinearity of the
product $\shpr$, we have
\allowdisplaybreaks{
\begin{eqnarray*}
(\frakx'\shpr \frakx'')\shpr \frakx'''
&=& \big( \lc \lc \ox'\rc \shpr
\ox ''\rc +\lc\ox'\shpr \lc\ox''\rc\rc
    -\lc\lc\ox'\shpr \ox''\rc \rc\big ) \shpr \lc \ox'''\rc \\
&=& \lc\lc \ox'\rc \shpr \ox''\rc \shpr \lc\ox'''\rc
    + \lc\ox'\shpr \lc \ox''\rc \rc\shpr \lc \ox'''\rc
    -\lc \lc \ox'\shpr \ox''\rc\rc \shpr \lc\ox'''\rc \\
&=&  \lc\lc\lc \ox'\rc\shpr \ox''\rc\shpr \ox''' \rc
    + \lc\big(\lc\ox'\rc \shpr \ox''\big) \shpr \lc\ox'''\rc\rc
    -\lc \lc\big(\lc\ox'\rc \shpr\ox''\big)\shpr \ox'''\rc\rc\\
&& + \lc\lc\ox'\shpr\lc\ox''\rc\rc \shpr \ox'''\rc
    + \lc\big(\ox'\shpr\lc \ox''\rc\big) \shpr\lc \ox'''\rc\rc
    -\lc \lc\big(\ox'\shpr \lc \ox''\rc \big) \shpr \ox'''\rc\rc \\
&& - \lc \lc \lc \ox'\shpr \ox''\rc\rc\shpr \ox'''\rc
    -\lc \lc \ox'\shpr \ox''\rc\shpr \lc \ox'''\rc \rc
    +\lc\lc \lc \ox'\shpr \ox''\rc \shpr \ox'''\rc\rc.
\end{eqnarray*}}
Applying the induction hypothesis in $n$ to the fifth term $\big
(\ox'\shpr\lc \ox''\rc\big) \shpr\lc \ox'''\rc$ and the eighth term, and then use
Eq.~(\mref{eq:shprod0}) again, we obtain
\allowdisplaybreaks{
\begin{eqnarray*}
(\frakx'\shpr \frakx'')\shpr \frakx'''
&=& \lc\lc\lc \ox'\rc\shpr \ox''\rc\shpr \ox''' \rc
    + \lc\big(\lc\ox'\rc \shpr \ox''\big) \shpr \lc\ox'''\rc\rc
    -\lc \lc\big(\lc\ox'\rc \shpr\ox''\big)\shpr \ox'''\rc\rc\\
&& + \lc\lc\ox'\shpr\lc\ox''\rc\rc \shpr \ox'''\rc
    + \lc\ox'\shpr\lc \lc \ox''\rc \shpr\ox'''\rc\rc
    + \lc\ox'\shpr\lc \ox'' \shpr\lc \ox'''\rc\rc\rc\\
&&  - \lc\ox'\shpr\lc \lc\ox'' \shpr \ox'''\rc\rc\rc
    -\lc \lc\big(\ox'\shpr \lc \ox''\rc \big) \shpr \ox'''\rc\rc
    - \lc \lc \lc \ox'\shpr \ox''\rc\rc\shpr \ox'''\rc
    \\
&&-\lc \lc  \lc\ox'\shpr \ox''\rc\shpr \ox'''\rc \rc
  -\lc \lc (\ox'\shpr \ox'')\shpr \lc \ox'''\rc \rc\rc
  +\lc \lc \lc(\ox'\shpr \ox'')\shpr  \ox'''\rc\rc \rc\\
&&  +\lc\lc \lc \ox'\shpr \ox''\rc \shpr \ox'''\rc\rc\\
&=& \lc\lc\lc \ox'\rc\shpr \ox''\rc\shpr \ox''' \rc
    + \lc\big(\lc\ox'\rc \shpr \ox''\big) \shpr \lc\ox'''\rc\rc
    -\lc \lc\big(\lc\ox'\rc \shpr\ox''\big)\shpr \ox'''\rc\rc\\
&& + \lc\lc\ox'\shpr\lc\ox''\rc\rc \shpr \ox'''\rc
    + \lc\ox'\shpr\lc \lc \ox''\rc \shpr\ox'''\rc\rc
    + \lc\ox'\shpr\lc \ox'' \shpr\lc \ox'''\rc\rc\rc\\
&&  - \lc\ox'\shpr\lc \lc\ox'' \shpr \ox'''\rc\rc\rc
    -\lc \lc\big(\ox'\shpr \lc \ox''\rc \big) \shpr \ox'''\rc\rc
    - \lc \lc \lc \ox'\shpr \ox''\rc\rc\shpr \ox'''\rc
    \\
&&  -\lc \lc (\ox'\shpr \ox'')\shpr \lc \ox'''\rc \rc\rc
  +\lc \lc \lc(\ox'\shpr \ox'')\shpr  \ox'''\rc\rc \rc.
\end{eqnarray*}}
By a similar computation, we obtain
\allowdisplaybreaks{
\begin{eqnarray*}
\frakx' \shpr \big(\frakx''\shpr \frakx'''\big)
&=& \lc\lc\lc\ox'\rc\shpr \ox''\rc\shpr \ox'''\rc
    + \lc \lc \ox'\shpr \lc\ox''\rc\rc \shpr \ox'''\rc
    -\lc \lc\lc\ox'\shpr\ox''\rc\rc\shpr\ox'''\rc\\
&&  +\lc \ox'\shpr \lc \lc \ox''\rc \shpr \ox'''\rc\rc
    - \lc\lc \ox'\shpr \big(\lc\ox''\rc\shpr \ox'''\big)\rc\\
&&  + \lc\lc \ox'\rc\shpr \big(\ox''\shpr \lc \ox'''\rc \big) \rc
    + \lc \ox' \shpr \lc \ox'' \shpr \lc \ox'''\rc\rc\rc
    -\lc \lc \ox'\shpr \big( \ox''\shpr \lc \ox'''\rc \big)\rc\rc\\
&&  - \lc\lc\lc\ox'\rc\shpr (\ox''\shpr \ox''') \rc\rc
    +\lc\lc\lc\ox'\shpr (\ox''\shpr \ox''') \rc\rc\rc
    -  \lc \ox'\shpr \lc\lc \ox''\shpr \ox'''\rc\rc\rc.
\end{eqnarray*}}
Now by induction, the $i$-th term in the expansion of $(\frakx'\shpr
\frakx'')\shpr \frakx'''$ matches with the $\sigma(i)$-th term  in
the expansion of $\frakx'\shpr(\frakx'' \shpr \frakx''')$. Here the
permutation $\sigma\in \Sigma_{11}$ is given by
\begin{equation}
\sigma= \left ( \begin{array}{ccccccccccc} 1&2&3&4&5&6&7&8&9&10&11\\
    1&6&9&2&4&7&11&5&3&8&10\end{array} \right ).
\mlabel{eq:sigma}
\end{equation}
This completes the proof of Theorem~\mref{thm:freeao}.\mref{it:alg}.

\mref{it:RB}. The proof follows from the definition
$N_A(\frakx)=\lc \frakx\rc$ and Eq. (\mref{eq:shprod0}).

\mref{it:free}. Let $(N,\ast,P)$ be a Nijenhuis algebra with multiplication $\ast$. Let $f:A\to N$ be a  $\bfk$-algebra homomorphism. We will construct a
$\bfk$-linear map $\free{f}:\FN(A) \to N$ by defining
$\free{f}(\frakx)$ for $\frakx\in \frakX_\infty$. We achieve this by
defining $\free{f}(\frakx)$ for $\frakx\in \frakX_n,\ n\geq 0$,
inductively on $n$. For $\frakx\in \frakX_0:=X$, define
$\free{f}(\frakx)=f(\frakx).$ Suppose $\free{f}(\frakx)$ has been
defined for $\frakx\in \frakX_n$ and consider $\frakx$ in
$\frakX_{n+1}$ which is, by definition and Eq.~(\mref{eq:words3}),
\allowdisplaybreaks{
\begin{eqnarray*} \altx(X,\frakX_{n})& =&
    \Big( \bigsqcup_{r\geq 1} (X\lc \frakX_{n}\rc)^r \Big) \bigsqcup
    \Big(\bigsqcup_{r\geq 0} (X\lc \frakX_{n}\rc)^r  X\Big) \\
 &&    \bigsqcup \Big( \bigsqcup_{r\geq 0} \lc \frakX_{n}\rc (X\lc \frakX_{n}\rc)^r \Big)
   \bigsqcup \Big( \bigsqcup_{r\geq 0} \lc \frakX_{n}\rc (X\lc \frakX_{n}\rc)^r X\Big).
%\mlabel{eq:words2}
\end{eqnarray*}}
Let $\frakx$ be in the first union component $\bigsqcup_{r\geq 1}
(X\lc \frakX_{n}\rc)^r$ above. Then
$$\frakx = \prod_{i=1}^r(\frakx_{2i-1} \lc \frakx_{2i} \rc)$$
for $\frakx_{2i-1}\in X$ and $\frakx_{2i}\in \frakX_n$, $1\leq i\leq
r$. By the construction of the multiplication $\shpr$ and the
Nijenhuis operator $N_A$, we have
$$\frakx= \shpr_{i=1}^r(\frakx_{2i-1} \shpr \lc \frakx_{2i}\rc)
    = \shpr_{i=1}^r(\frakx_{2i-1} \shpr N_A(\frakx_{2i})).$$
Define
\begin{equation}
\free{f}(\frakx) = \ast_{i=1}^r \big(\free{f}(\frakx_{2i-1})
    \ast N\big (\free{f}(\frakx_{2i})) \big).
\mlabel{eq:hom}
\end{equation}
where the right hand side is well-defined by the induction
hypothesis. Similarly define $\free{f}(\frakx)$ if $\frakx$ is in
the other union components. For any $\frakx\in \frakX_\infty$, we
have $P_A(\frakx)=\lc \frakx\rc\in \frakX_\infty$, and by the definition of $\free{f}$ in (Eq. (\mref{eq:hom})), we have
\begin{equation}
\free{f}(\lc \frakx \rc)=P(\free{f}(\frakx)). \mlabel{eq:hom1-2}
\end{equation}
So $\free{f}$ commutes with the Nijenhuis operators. Combining this
equation with Eq.~(\mref{eq:hom}) we see that if
$\frakx=\frakx_1\cdots \frakx_b$ is the standard decomposition of
$\frakx$, then
\begin{equation}
 \free{f}(\frakx)=\free{f}(\frakx_1)*\cdots * \free{f}(\frakx_b).
\mlabel{eq:staohom}
\end{equation}

Note that this is the only possible way to define $\free{f}(\frakx)$
in order for $\free{f}$ to be a Nijenhuis algebra homomorphism
extending $f$.

It remains to prove that the map $\free{f}$ defined in
Eq.~(\mref{eq:hom}) is indeed an algebra homomorphism. For this we
only need to check the multiplicity
\begin{equation}
\free{f} (\frakx \shpr \frakx')=\free{f}(\frakx) \ast
\free{f}(\frakx') \mlabel{eq:hom2}
\end{equation}
for all $\frakx,\frakx'\in \frakX_\infty$. For this we use induction
on the sum of depths $n:=d(\frakx)+d(\frakx')$. Then $n\geq 0$. When
$n=0$, we have $\frakx,\frakx'\in X$. Then Eq.~(\mref{eq:hom2})
follows from the multiplicity of $f$. Assume the multiplicity holds
for $\frakx,\frakx' \in \frakX_\infty$ with $n\geq k$ and take
$\frakx,\frakx'\in \frakX_\infty$ with $n=k+1$. Let
$\frakx=\frakx_1\cdots \frakx_b$ and
$\frakx'=\frakx'_1\cdots\frakx'_{b'}$ be the standard
decompositions. Since $n=k+1\geq 1$, at least one of $\frakx_b$ and $\frakx'_{b'}$ is in $\lc \frakX_\infty\rc$. Then by Eq.~(\mref{eq:shprod0}) we have,
\begin{align*}
\free{f}(\frakx_b\shpr \frakx'_1)&= \left \{\begin{array}{ll}
%\free{f}(\frakx_b \spr \frakx'_1), & {\rm if\ } \frakx_b,\frakx'_1\in X,\\
\free{f}(\frakx_b \frakx'_1), & {\rm if\ } \frakx_b\in X, \frakx'_1\in \lc \frakX_\infty\rc,\\
\free{f}(\frakx_b \frakx'_1), & {\rm if\ } \frakx_b\in \lc
\frakX_\infty\rc,
    \frakx'_1\in X,\\
\free{f}\big( \lc \lc \ox_b\rc \shpr \ox'_1\rc +\lc \ox_b \shpr \lc
\ox'_1\rc \rc -\lc \lc \ox_b \shpr \ox'_1 \rc\rc\big), & {\rm if\ }
\frakx_b=\lc \ox_b\rc, \frakx'_1=\lc \ox'_1\rc \in \lc \frakX_\infty
\rc.
\end{array} \right .
\end{align*}
In the first two cases, the right hand side is
$\free{f}(\frakx_b)*\free{f}(\frakx'_1)$ by the definition of
$\free{f}$. In the third case, we have, by Eq.~(\mref{eq:hom1-2}),
the induction hypothesis and the Nijenhuis relation of the operator $P$ on $N$,
\begin{align*}
&\free{f}\big( \lc \lc \ox_b\rc \shpr \ox'_1\rc
    +\lc \ox_b \shpr \lc \ox'_1\rc \rc
-\lc \lc \ox_b \shpr \ox'_1 \rc\rc\big)\\
=&\free{f}(\lc \lc \ox_b\rc \shpr \ox'_1\rc) + \free{f}(\lc \ox_b
\shpr \lc \ox'_1\rc \rc)
-\free{f}(\lc \lc \ox_b \shpr \ox'_1 \rc\rc)\\
=&P(\free{f}(\lc \ox_b\rc \shpr \ox'_1)) + P(\free{f}(\ox_b \shpr
\lc \ox'_1\rc ))
-P(\free{f}(\lc\ox_b \shpr \ox'_1 \rc))\\
=&P(\free{f}(\lc \ox_b\rc)*\free{f}(\ox'_1)) + P(\free{f}(\ox_b)
*\free{f}( \lc \ox'_1\rc ))
-P(P(\free{f}(\ox_b) * \free{f}(\ox'_1)) )\\
=&P(P(\free{f}(\ox_b))*\free{f}(\ox'_1)) + P(\free{f}(\ox_b)
*P(\free{f}(\ox'_1)))
-P(P((\free{f}(\ox_b) * \free{f}(\ox'_1)) )\\
=& P(\free{f}(\ox_b))*P(\free{f}(\ox'_1))\\
=& \free{f}(\lc \ox_b\rc) * \free{f}(\lc\ox'_1\rc)\\
=& \free{f} (\frakx_b) *\free{f}(\frakx'_1).
\end{align*}
Therefore $\free{f}(\frakx_b\shpr
\frakx'_1)=\free{f}(\frakx_b)*\free{f}(\frakx'_1)$. Then
\begin{align*}
\free{f}(\frakx\shpr \frakx')&=
\free{f}\big(\frakx_1\cdots\frakx_{b-1}(\frakx_b\shpr
\frakx'_1)\frakx'_2\cdots
    \frakx'_{b'}\big) \\
&= \free{f}(\frakx_1)*\cdots *\free{f}(\frakx_{b-1})*
\free{f}(\frakx_b\shpr \frakx'_1)*\free{f}(\frakx'_2)\cdots
    \free{f}(\frakx'_{b'})\\
&= \free{f}(\frakx_1)*\cdots *\free{f}(\frakx_{b-1})*
\free{f}(\frakx_b)* \free{f} (\frakx'_1)*\free{f}(\frakx'_2)\cdots
    \free{f}(\frakx'_{b'})\\
&= \free{f}(\frakx)*\free{f}(\frakx').
\end{align*}
This is what we need.
\end{proof}

%%%%%%%%%%%%%%%%%%%%%%%%%%%%%%%%%%%%%%%%%%%%%%%%%%%%%%%%%%%%%%%%%%%%
%%%%%%%%%%%%%%%%%%%%%%%%%%%%%%%%%%%%%%%%%%%%%%%%%%%%%%%%%%%%%%%%%%%%%

\section{NS algebras and their universal enveloping algebras}
\mlabel{sec:adj}
%We first recall the concept of a dendriform Nijenhuis algebra and the functor from the category of Nijenhuis algebras to the category of dendriform Nijenhuis algebras~\mcite{Le2}. We then construct the universal enveloping algebra of dendriform Nijenhuis algebras.
%\subsection{Dendriform Nijenhuis algebras}

The concept of an NS algebra was introduced by Leroux~\mcite{Le2} as an analogue of the dendriform algebra of Loday~\mcite{Lo1} and the tridendriform algebra of Loday and Ronco~\mcite{L-R1}.
\begin{defn}
{\rm
An {\bf NS algebra} is a module $M$ with three binary
operations $\prec$, $\succ$ and $\bullet$ that satisfy the following
four relations
\begin{eqnarray}
&(x\prec y)\prec z=x\prec (y\star z),\quad
(x\succ y)\prec z=x\succ (y\prec z), & \notag \\
&(x\star y)\succ z=x\succ
(y\succ z),\quad
(x\star y)\bullet z+(x\bullet y)\prec z = x \succ (y\bullet z)
+x\bullet (y\star z).&
\mlabel{eq:ns}
\end{eqnarray}
for $x,y,z\in M$. Here $\star$ denotes $\prec+\succ+\,
\bullet$.
}
\end{defn}
NS algebras share similar properties as dendriform algebras. For example, the operation
$\star$ defines an associative operation. Another similarity is the following theorem which is an analogue of the results of Aguiar~\mcite{Ag3} and Ebrahimi-Fard~\mcite{EF1} that a Rota-Baxter algebra gives a dendriform algebra or a tridendriform algebra.

\begin{theorem} $($\mcite{Le2}$)$ A Nijenhuis algebra $(N,P)$  defines an NS algebra $(N,\prec_P,\succ_P,\bullet_P)$, where
\begin{equation} x\prec_P y=xP(y),\ x\succ_P y=P(x)y, x\bullet_P y=-P(xy).
\mlabel{eq:eqs}
\end{equation}
\mlabel{thm:le}
\end{theorem}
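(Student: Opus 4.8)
The statement asks for a direct verification that the three operations in Eq.~(\mref{eq:eqs}) satisfy the four NS relations of Eq.~(\mref{eq:ns}), so the proof is computational and the plan is mainly to organize that computation. Writing $\star_P$ for $\prec_P+\succ_P+\bullet_P$, so that $x\star_P y = xP(y)+P(x)y-P(xy)$, the first move I would make is to record the identity
\[
P(x\star_P y) = P\big(xP(y)\big)+P\big(P(x)y\big)-P^2(xy) = P(x)P(y),
\]
which is just the Nijenhuis equation Eq.~(\mref{eq:Nij}) rewritten. Granting this identity and the associativity of the product of $N$, the first three relations of Eq.~(\mref{eq:ns}) are immediate. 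For instance $(x\prec_P y)\prec_P z = \big(xP(y)\big)P(z)$ while $x\prec_P(y\star_P z) = x\,P(y\star_P z) = x\,P(y)P(z)$, and these agree by associativity; the mixed $\succ_P$--$\prec_P$ relation reduces to reassociating $P(x)\,y\,P(z)$, and the third relation reduces, using $(x\star_P y)\succ_P z = P(x\star_P y)z = P(x)P(y)z$, to reassociating $P(x)P(y)\,z$.

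The real content is the fourth relation. Expanding both sides via Eq.~(\mref{eq:eqs}), the left side is $-P\big((x\star_P y)z\big)-P(xy)P(z)$ and the right side is $-P(x)P(yz)-P\big(x(y\star_P z)\big)$. I would then substitute the formula for $\star_P$ into $(x\star_P y)z$ and into $x(y\star_P z)$, and apply the Nijenhuis equation Eq.~(\mref{eq:Nij}) twice: once to the product $P(xy)P(z)$ with arguments $xy,z$, and once to $P(x)P(yz)$ with arguments $x,yz$. On the left side the two copies of $P\big(P(xy)z\big)$ then cancel, and after collecting terms both sides reduce to the common expression
\[
P^2(xyz)-P\big(xP(y)z\big)-P\big(P(x)yz\big)-P\big(xyP(z)\big),
\]
which proves the relation, and hence the theorem.

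The only genuine obstacle is the bookkeeping in the fourth relation: one has to keep straight the minus sign built into $\bullet_P$ and remember that $\star_P$ contains the $\bullet_P$ term, so that each of $x\star_P y$ and $y\star_P z$ expands into three summands rather than two, and each application of the Nijenhuis equation contributes three terms. With care the terms pair off as indicated, and no further idea is needed; in particular the identity $P(x\star_P y)=P(x)P(y)$ is the single observation that does almost all the work.
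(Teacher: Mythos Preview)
Your proof is correct. The key identity $P(x\star_P y)=P(x)P(y)$ is exactly the right organizing principle, and your handling of the fourth relation is accurate: both sides reduce to $P^2(xyz)-P(xP(y)z)-P(P(x)yz)-P(xyP(z))$ after the cancellations you describe.

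As for comparison with the paper: the paper does not supply its own proof of this theorem---it is simply cited from Leroux~\mcite{Le2}. That said, the paper does carry out closely related verifications in the proof of Theorem~\mref{thm:wdn}, direction (\mref{it:pna}~$\Rightarrow$~\mref{it:nap}), where it checks the five N-dendriform relations of Eq.~(\mref{eq:wdna}) directly for $(N,\prec_P,\succ_P,\bullet_P)$. Those relations are strictly stronger than the NS relations (the fourth NS relation is the sum of the fourth and fifth N-dendriform relations), and the paper's computations there proceed by the same expand-and-apply-Nijenhuis method you use, arriving at essentially the same intermediate expressions. The main difference is that your use of the compact identity $P(x\star_P y)=P(x)P(y)$ streamlines the first three relations into one line each, whereas the paper's Section~\mref{sec:sdn} verification spells out the expansion of $P(y)P(z)$ explicitly.
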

Let $\NA$ denote the category of Nijenhuis algebras and let $\NS$ denote the category of NS algebras.
It is easy to see
that the map from $\NA$ to $\NS$ in Theorem~\mref{thm:le} is compatible with the morphisms in the two categories. Thus we obtain a functor
\begin{equation}
\cale: \NA \to \NS.
\mlabel{eq:nsdn}
\end{equation}
We will study its left adjoint functor.
%\subsection{Universal enveloping Nijenhuis algebras}
%\mlabel{ss:envel}

Motivated by the enveloping algebra of a Lie
algebra and the Rota-Baxter enveloping algebra of a tridendriform algebra~\mcite{EG2}, we are naturally led to the following definition.
\begin{defn}
{\rm
Let $M$ be an NS-algebra. A {\bf universal enveloping Nijenhuis algebra} of
 $M$ is a Nijenhuis algebra
$\UN(M)\in \NA$ with a homomorphism $\rho: M\to \UN(M)$ in $\NS$  such
that for any $N\in \NA$ and homomorphism $f:M\to N$ in $\NS$, there is a unique $\den{f}: \UN(M)\to N$ in $\NA$ such that $\den{f} \circ
\rho =f$.
}\mlabel{de:env}
\end{defn}
%\subsection{The existence of enveloping algebras}

Let $M:=(M,\prec,\succ,\bullet)\in  \NS$.  Let $T(M)=\bigoplus_{n\geq 1}
M^{\ot n}$ be the tensor algebra. Then $T(M)$ is the free  algebra generated by the $\bfk$-module
$M$.
By Corollary~\mref{co:vecfree}, $\FN(T(M))$, with the natural
injection $i_M: M\to T(M) \to \FN(T(M))$, is the free Nijenhuis
algebra over the vector space $M$.

Let $J_M$ be the Nijenhuis ideal of $\FN(T(M))$ generated by the set
\begin{equation}
\big \{ x\prec y - xP( y),\;
    x\succ y - P( x) y,\;x\bullet y=P(x\otimes y)  \big|\ x,y\in M \big\}
\mlabel{eq:gendend}
\end{equation}
Let $\pi: \FN(T(M))\to \FN(T(M))/J_M$ be the quotient map.

\begin{theorem}
Let $(M,\prec,\succ,\bullet)$ be an NS algebra. The quotient Nijenhuis algebra $\FN(T(M))/J_M$, together with $\rho:=
\pi \circ i_M$, is the universal enveloping Nijenhuis algebra of $M$. \mlabel{thm:envdend}
\end{theorem}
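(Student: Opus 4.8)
The plan is to deduce the universal property of Theorem~\mref{thm:envdend} from the universal property of $\FN(T(M))$ as the free Nijenhuis algebra on the $\bfk$-module $M$ (Corollary~\mref{co:vecfree}), exactly as one derives the Rota--Baxter enveloping algebra of a (tri)dendriform algebra in~\mcite{EG2}. Two preliminary points are needed. First, since $J_M$ is a Nijenhuis ideal, the quotient $\FN(T(M))/J_M$ is again a Nijenhuis algebra: it inherits a well-defined multiplication and a well-defined operator, and the Nijenhuis equation descends; thus $\FN(T(M))/J_M\in\NA$ and $\pi$ is a morphism in $\NA$. Second, applying Theorem~\mref{thm:le} to this quotient equips it with an NS-algebra structure $(\prec_P,\succ_P,\bullet_P)$, and the three families of relations generating $J_M$ in Eq.~(\mref{eq:gendend}) say precisely that $\pi(i_M(x\prec y))=\pi(i_M(x))\prec_P\pi(i_M(y))$, and likewise for $\succ$ and $\bullet$, for all $x,y\in M$. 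Hence $\rho=\pi\circ i_M$ is a morphism in $\NS$, so $(\FN(T(M))/J_M,\rho)$ is data of the required type.

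Next I would verify the factorization property. Let $(N,\ast,P)\in\NA$ and let $f\colon M\to N$ be a morphism in $\NS$, where $N$ carries the NS structure of Theorem~\mref{thm:le}. Regarding $f$ merely as a $\bfk$-module map $M\to N$, Corollary~\mref{co:vecfree} supplies a unique Nijenhuis algebra homomorphism $\freev{f}\colon\FN(T(M))\to N$ with $\freev{f}\circ i_M=f$. The main step is to show $\freev{f}(J_M)=0$; since $\freev{f}$ is a Nijenhuis algebra homomorphism and $J_M$ is the Nijenhuis ideal generated by the elements of Eq.~(\mref{eq:gendend}), it suffices to check $\freev{f}$ annihilates those generators. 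Using that $\freev{f}$ is multiplicative and commutes with the Nijenhuis operators, one computes for $x,y\in M$ that $\freev{f}(x\prec y-xP(y))=f(x\prec y)-f(x)\ast P(f(y))=f(x\prec y)-f(x)\prec_P f(y)=0$, and similarly $\freev{f}(x\succ y-P(x)y)=f(x\succ y)-f(x)\succ_P f(y)=0$ and $\freev{f}(x\bullet y+P(x\otimes y))=f(x\bullet y)-f(x)\bullet_P f(y)=0$, each vanishing because $f$ is an NS-morphism into the NS structure of Theorem~\mref{thm:le} on $N$. Therefore $\freev{f}$ factors uniquely through $\pi$ as a morphism $\den{f}\colon\FN(T(M))/J_M\to N$ in $\NA$, and $\den{f}\circ\rho=\den{f}\circ\pi\circ i_M=\freev{f}\circ i_M=f$.

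Finally, for uniqueness of $\den{f}$: if $g\colon\FN(T(M))/J_M\to N$ is any morphism in $\NA$ with $g\circ\rho=f$, then $g\circ\pi\colon\FN(T(M))\to N$ is a Nijenhuis algebra homomorphism with $(g\circ\pi)\circ i_M=f$, so by the uniqueness clause of Corollary~\mref{co:vecfree} we get $g\circ\pi=\freev{f}=\den{f}\circ\pi$; since $\pi$ is surjective, $g=\den{f}$. The only computational point is the annihilation of the generators in the middle step, and even that is routine --- it amounts to matching $xP(y)$, $P(x)y$, $-P(x\otimes y)$ inside $\FN(T(M))$ with $\prec_P$, $\succ_P$, $\bullet_P$ on $N$ after applying the Nijenhuis algebra homomorphism $\freev{f}$. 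So I do not anticipate a genuine obstacle: the content is entirely in correctly setting up the two universal properties, and the substantive work (the construction of $\FN(T(M))$ and Corollary~\mref{co:vecfree}) has already been done.
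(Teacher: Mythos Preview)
Your proposal is correct and follows essentially the same route as the paper: lift $f$ to $\freev{f}\colon\FN(T(M))\to N$ via Corollary~\mref{co:vecfree}, check that $\freev{f}$ kills the generators of $J_M$ because $f$ is an NS-morphism, factor through $\pi$, and deduce uniqueness from surjectivity of $\pi$. You are in fact slightly more careful than the paper in that you explicitly verify $\rho=\pi\circ i_M$ is a morphism in $\NS$, which the paper takes for granted.
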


\begin{proof}
The proof is similar to the case of tridendriform algebras and Rota-Baxter algebras~\mcite{EG2}. So we skip some of the details.

Let $(N,P)$ be a Nijenhuis algebra and let $f:M\to N$ be a homomorphism
in $\NS$. More precisely, we have $f:(M,\prec,\succ,\bullet) \to (N,\prec_P',\succ_P',\bullet_P)$. We will complete the following commutative
diagram, using notations from Corollary~\mref{co:vecfree}.
\begin{equation}
\xymatrix{ & T(M) \ar[rd]^{j_{T(M)}} \ar@{.>}[lddd]^{\freea{f}} & \\
M \ar[rr]^{i_M} \ar[dd]_f \ar[ru]^{k_M} && \FN(T(M)) \ar[dd]^\pi \ar@{.>}[ddll]_{\freev{f}} \\
&& \\
N && \FN(T(M))/J_M \ar@{.>}[ll]_{\den{f}} }
\end{equation}

By the universal property of the free algebra $T(M)$ over $M$, there
is a unique homomorphism $\freea{f}:T(M)\to N$  such that
$\freea{f}\circ k_M =f$. So $\freea{f}(x_1\ot \cdots \ot
x_n)=f(x_1) * \cdots * f(x_n)$. Here $*$ is the product in $N$. Then
by the universal property of the free Nijenhuis algebra $\FN(T(M))$
over $T(M)$, there is a unique morphism $\free{\freea{f}}:\FN(T(M))
\to N$ in $\NA$ such that $\free{\freea{f}}\circ j_{T(M)}
=\freea{f}$. By Corollary~\mref{co:vecfree},
$\free{\freea{f}}=\freev{f}.$ Then
\begin{equation} \freev{f}\circ i_M =\freev{f} \circ j_{T(M)} \circ
k_M = \freea{f} \circ k_M = f. \mlabel{eq:free2}
\end{equation}
So for any $x,y\in M$, we check that
\begin{eqnarray*}
\freev{f}(x\prec y - x P( y))=0, \quad
\freev{f}(x\succ y - P(x)y)=0, \quad
\freev{f}(x\bullet y -  P(x\otimes y))=0.
\end{eqnarray*}
Thus $J_M$ is in $\ker(\freev{f})$ and there is a morphism $\den{f}:
\FN(T(M))/J_M\to N$ in $\NA$ such that $\freev{f}=\den{f} \circ \pi$.
Then by the definition of $\rho=\pi \circ i_M$ in the theorem and
Eq. (\mref{eq:free2}), we have
$$ \den{f}\circ \rho = \den{f} \circ \pi \circ i_M=\freev{f}\circ i_M=f.$$
This proves the existence of $\den{f}$.

Suppose $\den{f}':\FN(T(M))/J_M \to N$ is also a homomorphism in $\NA$
such that $\den{f}'\circ \rho=f$. Then
$$ (\den{f}' \circ \pi)\circ i_M = f = (\den{f}\circ \pi)\circ i_M.$$
By Corollary~\mref{co:vecfree}, the free Nijenhuis algebra
$\FN(T(M))$ over the algebra $T(M)$ is also the free Nijenhuis
algebra over the vector space $M$ with respect the natural injection
$i_M$. So we have $\den{f}'\circ \pi = \den{f} \circ \pi$ in $\NA$.
Since $\pi$ is surjective, we have $\den{f}'=\den{f}$. This proves
the uniqueness of $\den{f}$.
\end{proof}

%%%%%%%%%%%%%%%%%%%%%%%%%%%%%%%%%%%%%%%%%%%%%%%%%%%%%%%
%%%%%%%%%%%%%%%%%%%%%%%%%%%%%%%%%%%%%%%%%%%%%%%%%%%

\section{From Nijenhuis algebras to N-dendriform algebras}
\mlabel{sec:sdn}

In this section, we consider an inverse of Theorem~\mref{thm:le} in the following sense.  Suppose $(N,P)$ is a Nijenhuis algebra and define binary operations $$
x\prec_P y=xP(y),\ x\succ_P y=P(x)y, x\bullet_P y=-P(xy).$$
By Theorem~\mref{thm:le}, the three operations satisfy the NS relations in Eq.~(\mref{eq:ns}). Our inverse question is, what other quadratic nonsymmetric relations could
$(N,\prec_P,\succ_P,\bullet_P)$ satisfy? We recall some background on binary quadratic nonsymmetric operads in order to make the question precise. We then determine all the quadratic nonsymmetric relations that are consistent with the Nijenhuis operator.

\subsection{Background and the statement of Theorem~\mref{thm:wdn}}
For details on binary quadratic nonsymmetric operads, see~\mcite{Gub,LV}.

\begin{defn}
{\rm
Let $\bfk$ be a field.
\begin{enumerate}
\item
A {\bf graded vector space} is a sequence  $\calp:=\{\calp_n\}_{n\geq 0}$ of $\bfk$-vector spaces $\calp_n, n\geq 0$.
\item
A {\bf nonsymmetric (ns) operad} is a graded vector space $\calp=\{\calp_n\}_{n\geq 0}$ equipped with {\bf partial compositions}:
\begin{equation}
\circ_i:=\circ_{m,n,i}: \calp_m\ot \calp_n\longrightarrow \calp_{m+n-1}, \quad 1\leq i\leq m,
\mlabel{eq:opc}
\end{equation}
such that, for $\lambda\in\calp_\ell, \mu\in\calp_m$ and $\nu\in\calp_n$, the following relations hold.
\begin{enumerate}
\item[(i)] $
(\lambda \circ_i \mu)\circ_{i-1+j}\nu = \lambda\circ_i (\mu\circ_j\nu), \quad 1\leq i\leq \ell, 1\leq j\leq m.$
\mlabel{it:esc}
\item[(ii)]$(\lambda\circ_i\mu)\circ_{k-1+m}\nu =(\lambda\circ_k\nu)\circ_i\mu, \quad
1\leq i<k\leq \ell.$
\mlabel{it:epc}
\item[(iii)]
There is an element $\id\in \calp_1$ such that $\id\circ \mu=\mu$ and $\mu\circ\id=\mu$ for $\mu\in \calp_n, n\geq 0$.
\mlabel{it:id}
\end{enumerate}
\end{enumerate}
}
\end{defn}

An ns operad $\calp=\{\calp_n\}$ is called {\bf binary} if $\calp_1=\bfk.\id$ and $\calp_n,
n\geq 3$ are induced from $\calp_2$ by composition. Then in particular, for the free operad, we have
\begin{equation}
\calp_3=(\calp_2 \circ_1 \calp_2) \oplus (\calp_2\circ_2 \calp_2),
\mlabel{eq:bq}
\end{equation}
which can be identified with $\calp_2^{\ot 2}\oplus \calp_2^{\ot 2}$. A binary ns operad $\calp$ is called {\bf quadratic} if
all relations among the binary operations in $\calp_2$ are derived
from $\calp_3$.

Thus a binary, quadratic, ns operad
is determined by a pair $(\dfgen,\dfrel)$ where $\dfgen=\calp_2$, called
the {\bf space of generators},
and $\dfrel$ is a subspace of $\dfgen^{\otimes 2} \oplus \dfgen^{\otimes 2}$,
called the {\bf space of relations.} So we can denote $\calp=\calp(\dfgen)/(\dfrel)$.

Note that a typical element of $\dfgen^{\ot 2}$ is of the form $\sum\limits_{i=1}^k\dfoa_i\otimes \dfob_i$ with $\dfoa_i,\dfob_i\in V, 1\leq i\leq k$. Thus a typical element of
$\dfgen^{\otimes 2} \oplus \dfgen^{\otimes 2}$ is of the form
$$\left (\sum_{i=1}^k\dfoa_i\otimes \dfob_i, \sum_{j=1}^m\dfoc_j\otimes \dfod_j \right), \quad
\dfoa_i,\dfob_i,\dfoc_j,\dfod_j\in \dfgen,
1\leq i\leq k, 1\leq j\leq m, k, m\geq 1.$$

For a given binary quadratic ns operad $\calp=\calp(\dfgen)/(\dfrel)$, a $\bfk$-vector space $A$ is called a {\bf $\calp$-algebra}
\index{$\calp$-algebra}
if $A$ has binary operations (indexed by) $\dfgen$ and if,
for
$$\big (\sum_{i=1}^k\dfoa_i\otimes \dfob_i, \sum_{j=1}^m\dfoc_j\otimes \dfod_j \big)
\in \dfrel \subseteq
\dfgen^{\otimes 2} \oplus \dfgen^{\otimes 2}$$ with
$\dfoa_i,\dfob_i,\dfoc_j,\dfod_j\in \dfgen$, $1\leq i\leq k$, $1\leq j\leq m$,
we have
\begin{equation}
 \sum_{i=1}^k(x\dfoa_i y) \dfob_i z = \sum_{j=1}^m x \dfoc_j (y \dfod_j z), \quad  \forall\ x,y,z\in A.
\mlabel{eq:rel}
\end{equation}
For example, from Eq.~(\mref{eq:ns}) the NS algebras are precisely the $\calp$-algebras where $\calp=\calp(V)/(R)$ with $R$ being the subspace of $V^{\ot 2}\oplus V^{\ot 2}$ spanned by the four elements
%\newpage
\begin{eqnarray*}
&(\prec\ot\prec,\prec \ot\star),\quad
(\succ \ot\prec,\succ\ot\prec), & \\
&(\star \ot\succ,\succ\ot\succ),\quad
(\star\ot\bullet +\bullet \,\ot\prec, \succ\ot\bullet
+\bullet\ot\star),&
\end{eqnarray*}
where $\star=\prec+\succ+\,\bullet$.

\begin{theorem}
Let $V=\bfk\{\prec,\succ,\bullet\}$ be the vector space with basis $\{\prec,\succ,\bullet\}$ and let $\calp=\calp(\dfgen)/(\dfrel)$ be a binary quadratic ns operad. The following statements are equivalent.
\begin{enumerate}
\item
For every Nijenhuis algebra $(N,P)$, the quadruple $(N,\prec_P,\succ_P,\bullet_P)$ is a $\calp$-algebra.
\mlabel{it:nap}
\item
The relation space $\dfrel$ of $\calp$ is contained in the subspace of $V^{\ot 2}\oplus V^{\ot 2}$ spanned by
\begin{eqnarray}
&&(\prec \ot \prec,\prec\ot \star),\notag\\
&&(\succ \ot \prec,\succ\ot \prec),\notag\\
&&(\succ\ot \star, \succ\ot \succ),
\mlabel{eq:wdn}\\
&&(\prec\ot\bullet, \bullet\,\ot\succ),\notag\\
&&(\succ\ot\bullet+\bullet\ot\prec+\bullet\ot\bullet,
\succ\ot\bullet+\bullet\ot\prec+\bullet\ot\bullet), \notag
\end{eqnarray}
where $\star=\prec+\succ+\,\bullet\,$.
More precisely, any $\calp$-algebra $A$ satisfies the relations
\begin{eqnarray}
(x\prec y)\prec z&=&x \prec(y \prec z)+x \prec(y\prec
z)+x\prec(y\bullet z), \notag\\
(x\succ y)\prec z&=&x\succ(y\prec z), \notag\\
(x\prec y)\succ z+(x\succ y)\succ z+(x\bullet
 y)\succ z&=&x\succ(y\succ z), \quad \forall x,y,z\in A \mlabel{eq:wdna}\\
(x\prec y)\bullet z&=&x\bullet (y\succ z), \notag\\
(x\succ y)\bullet z+(x\bullet y)\prec z+(x\bullet y)\bullet
 z&=&x\succ(y\bullet z)+x\bullet (y\prec z)+x\bullet (y\bullet z). \notag
\end{eqnarray}
\mlabel{it:pna}
\end{enumerate}
\mlabel{thm:wdn}
\end{theorem}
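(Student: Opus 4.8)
The plan is to prove the two implications separately, using for the second the explicit model of the free Nijenhuis algebra from Section~\mref{sec:nonua}. For the implication $(b)\Rightarrow(a)$, note that a $\calp$-algebra is only required to satisfy the relations coming from $\dfrel$; since $\dfrel$ is assumed to lie in the span of the five elements of Eq.~(\mref{eq:wdn}), it suffices to check that for an \emph{arbitrary} Nijenhuis algebra $(N,P)$ the quadruple $(N,\prec_P,\succ_P,\bullet_P)$ satisfies the five identities written out in Eq.~(\mref{eq:wdna}). This is a direct computation: substitute $x\prec_P y=xP(y)$, $x\succ_P y=P(x)y$, $x\bullet_P y=-P(xy)$ and $x\star_P y=xP(y)+P(x)y-P(xy)$, and repeatedly use the Nijenhuis equation in the form $P(u)P(v)=P(P(u)v)+P(uP(v))-P^2(uv)$. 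For instance, the first identity collapses to $xP(y)P(z)$ on both sides once the Nijenhuis equation is applied to $P(y)P(z)$ inside $x\prec_P(y\star_P z)$, while the last and most involved identity reduces, after applying the Nijenhuis equation once to $P(x)P(yz)$ and once to $P(xy)P(z)$, to the common value $-P(P(x)yz)-P(xyP(z))+P^2(xyz)$ on both sides. I would record these five short computations.

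For the implication $(a)\Rightarrow(b)$, let $M=\bfk a\oplus\bfk b\oplus\bfk c$ and, by Corollary~\mref{co:vecfree}, regard $\FN(T(M))$ with its canonical Nijenhuis operator $N$ as the free Nijenhuis algebra on the $\bfk$-module $M$, equipped with the operations $\prec_N,\succ_N,\bullet_N$ of Eq.~(\mref{eq:eqs}). Hypothesis (a) says this quadruple is a $\calp$-algebra, so every $\rho=\big(\sum_i\dfoa_i\ot\dfob_i,\ \sum_j\dfoc_j\ot\dfod_j\big)$ in $\dfrel$ satisfies $\sum_i(a\,\dfoa_i\,b)\,\dfob_i\,c=\sum_j a\,\dfoc_j\,(b\,\dfod_j\,c)$ inside $\FN(T(M))$. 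Writing a general element of $\dfgen^{\ot 2}\oplus\dfgen^{\ot 2}$ with $18$ unknown coefficients, one per pair of operations in each of the two components, I would expand all $18$ products $(a\,\mathrm{op}\,b)\,\mathrm{op}'\,c$ and $a\,\mathrm{op}\,(b\,\mathrm{op}'\,c)$ in the $\bfk$-basis $\frakX_\infty$ using the product formula~(\mref{eq:shprod0}) and Lemma~\mref{lem:match}. Every term that occurs lies in an explicit set of $13$ bracketed words (such as $a\lc\lc b\rc c\rc$, $a\lc b\lc c\rc\rc$, $\lc a\lc b\rc\rc c$, $\lc\lc a\rc b\rc c$, $\lc\lc a\ot b\rc c\rc$, $\lc(a\ot b)\lc c\rc\rc$, $\lc a\lc b\ot c\rc\rc$, and so on), so comparing coefficients of these $13$ words converts the condition that $\rho$ be satisfied in this $\calp$-algebra into a homogeneous linear system of $13$ equations in the $18$ unknowns. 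Solving it shows that its solution space is exactly $5$-dimensional, spanned by the five elements of Eq.~(\mref{eq:wdn}); this is statement (b). Equivalently, $(b)\Rightarrow(a)$ already exhibits those five (manifestly linearly independent) elements in the solution space, so it is enough to verify that the $13$ equations have rank $13$.

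The implication $(b)\Rightarrow(a)$ is routine; the real work is the bookkeeping in $(a)\Rightarrow(b)$. The delicate points there are the non-concatenation case of~(\mref{eq:shprod0}) --- for example $\lc b\rc\shpr\lc c\rc=\lc\lc b\rc c\rc+\lc b\lc c\rc\rc-\lc\lc b\ot c\rc\rc$, together with its iterates when brackets are already present --- and keeping the product of $T(M)$ honest, so that $b\ot c$ is a genuinely new basis letter of $\frakX_\infty$, distinct from $b$ and $c$; once these are handled with care, all that remains is to avoid arithmetic slips in the resulting $13\times 18$ system.
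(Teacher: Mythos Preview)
Your proposal is correct and follows essentially the same route as the paper: for $(b)\Rightarrow(a)$ both you and the paper verify the five identities in Eq.~(\mref{eq:wdna}) by direct computation with the Nijenhuis equation, and for $(a)\Rightarrow(b)$ both specialize a general $18$-coefficient relation to the free Nijenhuis algebra $\FN(T(M))$ on three generators, expand to get a linear combination of $13$ basis words of $\frakX_\infty$, and read off the resulting linear constraints. The only cosmetic difference is that the paper first writes the relation in an arbitrary Nijenhuis algebra and applies the Nijenhuis identity to the three products $P(y)P(z)$, $P(xy)P(z)$, $P(x)P(yz)$ before specializing, whereas you propose to compute directly in $\FN(T(M))$ via Eq.~(\mref{eq:shprod0}); these amount to the same calculation.
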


Note that the relations of the NS algebra in Eq.~(\mref{eq:ns}) is contained in the space spanned by the relations in Eq.~(\mref{eq:wdn}). We call $\calp$ defined by the relations in Eq.~(\mref{eq:wdn}) the {\bf N-dendriform operad} and call a quadruple $(A,\prec,\succ,\bullet)$ satisfying Eq.~(\mref{eq:wdna}) an {\bf N-dendriform algebra}. Let $\ND$ denote the category of N-dendriform algebras. Then we have the following immediate corollary of Theorem~\mref{thm:wdn}.
\begin{coro}
\begin{enumerate}
\item
There is a natural functor
\begin{equation}
\calf: \NA \to \ND, \quad (N,P)\mapsto (N,\prec_P,\succ_P,\bullet_P).
\mlabel{eq:nasdn}
\end{equation}
\mlabel{it:nawdn}
\item
There is a natural (inclusion) functor
\begin{equation}
\calg: \ND\to \NS, \quad (M,\prec,\succ,\bullet) \mapsto (M,\prec,\succ,\bullet).
\mlabel{eq:sdndn}
\end{equation}
\mlabel{it:dnwdn}
\item
The functors $\calf$ and $\calg$ give a refinement of the functor $\cale:\NA\to \NS$ in Eq.~(\mref{eq:ns}) in the sense that the following diagram commutes
\begin{equation}
\xymatrix{ \NA \ar[rr]^{\calf} \ar[rrd]^{\cale} && \ND \ar[d]^{\calg} \\
&& \NS
}
\end{equation}
\mlabel{it:comm}
\end{enumerate}
\end{coro}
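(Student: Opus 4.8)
The plan is to read the corollary as a purely formal consequence of Theorem~\ref{thm:wdn} plus three short bookkeeping checks, one per item; nothing deeper is required.

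For item~(\ref{it:nawdn}) I would first observe that the assertion ``$(N,\prec_P,\succ_P,\bullet_P)$ is an N-dendriform algebra'' is nothing but the implication (\ref{it:pna})$\Rightarrow$(\ref{it:nap}) of Theorem~\ref{thm:wdn} applied to $\calp$ equal to the N-dendriform operad, whose space of relations is by definition the span of the elements of Eq.~(\ref{eq:wdn}) and hence trivially contained in that span; in concrete terms it is the computation (carried out inside the proof of Theorem~\ref{thm:wdn}) that the five identities of Eq.~(\ref{eq:wdna}) hold for $\prec_P,\succ_P,\bullet_P$ in every Nijenhuis algebra. I would then check functoriality: if $\phi\colon(N,P)\to(N',P')$ is a morphism of Nijenhuis algebras, i.e.\ an algebra homomorphism with $\phi\circ P=P'\circ\phi$, then $\phi(x\prec_P y)=\phi(xP(y))=\phi(x)P'(\phi(y))=\phi(x)\prec_{P'}\phi(y)$, and likewise for $\succ_P$ and $\bullet_P$, so $\phi$ is a morphism in $\ND$; since $\calf$ is the identity on underlying vector spaces, preservation of identities and of composition is automatic.

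For item~(\ref{it:dnwdn}) the task is to see that an N-dendriform algebra is in particular an NS algebra, that is, that the four relations of Eq.~(\ref{eq:ns}) are linear consequences of the five relations of Eq.~(\ref{eq:wdna}) (equivalently, as already remarked after Theorem~\ref{thm:wdn}, that the NS relation space sits inside the N-dendriform relation space). Here I would note that the first three NS relations coincide verbatim with the first three N-dendriform relations, and that adding the fourth and the fifth N-dendriform relations of Eq.~(\ref{eq:wdna}) and writing $\star=\prec+\succ+\bullet$ yields precisely the fourth NS relation $(x\star y)\bullet z+(x\bullet y)\prec z=x\succ(y\bullet z)+x\bullet(y\star z)$. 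This makes $\calg$ well defined on objects; on morphisms it is the identity on underlying data, and an N-dendriform morphism is manifestly a morphism in $\NS$, so $\calg$ is the claimed (inclusion) functor.

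For item~(\ref{it:comm}) I would simply remark that $\calg\circ\calf$ and $\cale$ both carry $(N,P)$ to $(N,\prec_P,\succ_P,\bullet_P)$ and act as the identity on underlying spaces and on morphisms, hence are equal, which is the asserted commutativity. The only non-elementary input in the whole argument is Theorem~\ref{thm:wdn}, used in item~(\ref{it:nawdn}); the single place that calls for (mild) care is the expansion in item~(\ref{it:dnwdn}) exhibiting the fourth NS relation as the sum of the fourth and fifth N-dendriform relations, and that is settled by the displayed identity.
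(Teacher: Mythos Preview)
Your proposal is correct and follows the same approach as the paper, which treats the corollary as an ``immediate'' consequence of Theorem~\ref{thm:wdn} together with the remark (stated just before the corollary) that the NS relation space is contained in the N-dendriform relation space. You have simply made explicit what the paper leaves implicit: the functoriality checks for $\calf$ and $\calg$, and the verification that the fourth NS relation is the sum of the fourth and fifth N-dendriform relations in Eq.~(\ref{eq:wdna}).
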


\subsection{The proof of Theorem~\mref{thm:wdn}}
%This section is devoted to the proof of Theorem~\mref{thm:wdn}.

With $V=\bfk \{\prec, \succ, \bullet\}$, we have
$$V^{\ot 2} \oplus V^{\ot 2} = \bigoplus\limits_{\dfop_1,\dfop_2,\dfop_3,\dfop_4\in \{\prec,\succ,\bullet\}} \bfk (\dfop_1\ot\dfop_2, \dfop_3\ot\dfop_4).$$
Thus any element $r$ of $\dfgen^{\otimes 2} \oplus \dfgen^{\otimes 2}$ is of the form
\begin{eqnarray*}
r&:=&a_1(\prec\ot \prec,0)+a_2(\prec\ot\succ,0)+a_3(\prec\ot \bullet,0)\\
&&+b_1(\succ\ot \prec,0)+b_2(\succ\ot \succ,0)+b_3(\succ\ot\bullet,0)\\
&&+c_1(\bullet\,\ot \prec,0)+c_2(\bullet\,\ot\succ,0)+c_3(\bullet\ot\bullet,0)\\
&&+d_1(0,\prec\ot \prec)+d_2(0,\prec\ot \succ)+d_3(0,\prec\ot \bullet)\\
&&+e_1(0,\succ\ot \prec)+e_3(0,\succ\ot \succ)+e_3(0,\succ\ot\bullet)\\
&&+f_1(0,\bullet\ot\prec)+f_2(0,\bullet\ot\succ) +f_3(0,\bullet\ot\bullet)
\end{eqnarray*}
where the coefficients are in $\bfk$.
\smallskip

\noindent
(\mref{it:nap} $\Rightarrow$ \mref{it:pna})
Let $\calp=\calp(V)/(R)$ be an operad satisfying the condition in Item~\mref{it:nap}. Let $r$ be in $R$ expressed in the above form. Then for any Nijenhuis algebra $(N,P)$, the quadruple $(N,\prec_P,\succ_P,\bullet_P)$ is a $\calp$-algebra. Thus \begin{eqnarray*}
&&a_1(x\prec_P y)\prec_P z+a_2(x\prec_P y)\succ_P z +a_3(x\prec_P y)\bullet_P z \\
&& +b_1(x\succ_P y)\prec_P z+b_2(x\succ_P y)\succ_P z+b_3(x\succ_P y)\bullet_P
z\\
&&+c_1(x\bullet_P y)\prec_P z+c_2(x\bullet_P y)\succ_P z+c_3(x\bullet_P y)\bullet_P z\\
&&+d_1 x\succ_P(y\succ_P z)+d_2 x\succ_P(y\prec_P z)+d_3 x\succ_P(y\bullet_P z)\\
&&+e_1 x\prec_P(y\succ_P z)+e_2 x\prec_P(y\prec_P z)+e_3x\prec_P(y\bullet_P z) \\
&&+f_1x \bullet_P(y\succ_P z)+f_2x \bullet_P (y\prec_P z)+f_3x \bullet_P (y\bullet_P z)
=0, \forall x,y,z\in N.
\end{eqnarray*}
By the definitions of $\prec_P,\succ_P,\bullet_P$ in Eq.(\mref{eq:eqs}), we have

\begin{eqnarray*}
&&a_1xP(y)P(z)+a_2P(xP(y))z-a_3P(xP(y)z)+b_1P(x)yP(z) \\
&&+b_2P(P(x)y)z-b_3P(P(x)yz)-c_1P(xy)P(z)-c_2P(P(xy))z \\ &&+c_3P(P(xy)z)+d_1P(x)P(y)z+d_2P(x)yP(z) \\
&&-d_3P(x)P(yz)+e_1xP(P(y)z)+e_2xP(yP(z))-e_3xP(P(yz)) \\ &&-f_1P(xP(y)z)-f_2P(xyP(z))+f_3P(xP(yz))
=0.
\end{eqnarray*}
Since $P$ is a Nijenhuis operator, we further have

\begin{eqnarray*}
&&a_1xP(yP(z))+a_1xP(P(y)z)-a_1xP^{2}(yz)+a_2P(xP(y))z -a_3P(xP(y)z) \\
&&+b_1P(x)yP(z)+b_2P(P(x)y)z-b_3P(P(x)yz)\\
&&-c_1P(xyP(z))-c_1P(P(xy)z)+c_1P^{2}(xyz) -c_2P(P(xy))z+c_3P(P(xy)z)\\
&& +d_1P(x)P(y)z+d_2P(x)yP(z)-d_3P(xP(yz))-d_3P(P(x)yz))
+d_3P^{2}(xyz)\\
&&+e_1xP(P(y)z)+e_2xP(yP(z))-e_3xP(P(yz))\\
&&-f_1P(xP(y)z)-f_2P(xyP(z))+f_3P(xP(yz))=0.
\end{eqnarray*}
Collecting similar terms, we obtain

\begin{eqnarray*}
&&(a_1+e_2) xP(yP( z) ) +(a_1+e_1) xP( P( y) z) -(a_1+e_3) xP(P( yz)  +(a_2+d_1) P( xP( y) ) z\\
&&-(a_3+f_1) P( xP( y) z) +(b_1+d_2) P( x) yP( z)  +(b_2+d_1) P( P( x) y) z-(b_3+d_3) P( P( x) yz) \\
&&-(c_1+f_2) P( xyP( z) ) +(c_3-c_1) P( P( xy) z) +(c_1+d_3) P^2( xyz)  \\
&&-(c_2+d_1) P( P( xy) ) z+(f_3-d_3) P( xP( yz) ) =0.
\end{eqnarray*}

Now we take the special case when $(N,P)$ is the free Nijenhuis algebra $(\FN(T(M)),P_{T(M)})$ defined in Corollary~\mref{co:vecfree} for our choice of $M=\bfk\{x,y,z\}$ and $P_{T(M)}(u)=\lc u\rc$. Then the above equation is just
\begin{eqnarray*}
&&(a_1+e_2) x\lc y\lc z\rc \rc +(a_1+e_1) x\lc \lc y\rc z\rc -(a_1+e_3) x\lc\lc yz\rc  +(a_2+d_1) \lc x\lc y\rc \rc z\\
&&-(a_3+f_1) \lc x\lc y\rc z\rc +(b_1+d_2) \lc x\rc y\lc z\rc  +(b_2+d_1) \lc \lc x\rc y\rc z-(b_3+d_3) \lc \lc x\rc yz\rc \\
&&-(c_1+f_2) \lc xy\lc z\rc \rc +(c_3-c_1) \lc \lc xy\rc z\rc +(c_1+d_3) \lc\lc xyz\rc\rc  \\
&&-(c_2+d_1) \lc \lc xy\rc \rc z+(f_3-d_3) \lc x\lc yz\rc \rc =0.
\end{eqnarray*}
Note that the set of elements
$$x\lc y\lc z\rc \rc, x\lc \lc y\rc z\rc, x\lc\lc yz\rc, \lc x\lc y\rc \rc z, \lc x\lc y\rc z\rc, \lc x\rc y\lc z\rc,$$
$$
\lc \lc x\rc y\rc z, \lc \lc x\rc yz\rc, \lc xy\lc z\rc \rc, \lc \lc xy\rc z\rc, \lc\lc xyz\rc\rc, \lc \lc xy\rc \rc z, \lc x\lc yz\rc \rc $$
is a subset of the basis $\frakX_\infty$ of the free Nijenhuis algebra $\FN(T(M))$ and hence is linearly independent. Thus the coefficients must be zero, that is,

\begin{eqnarray*}
&&a_1=-e_1=-e_2=-e_3, \\
&&a_2=b_2=c_2=-d_1, \\
&&a_3=-f_1,b_1=-d_2, \\
&&b_3=c_1=c_3=-f_2=-f_3=-d_3.
\end{eqnarray*}

Substituting these equations into the general relation $r$, we find that the any relation $r$ that can be satisfied by $\prec_P,\succ_P,\bullet_P$ for all Nijenhuis algebras $(N,P)$ is of the form

\begin{eqnarray*}
r&=&a_1\Big((x\prec y)\prec z-x \prec(y \prec z)-x \prec(y\prec
z)-x\prec(y\bullet z)\Big)\\
&&+b_1\Big((x\succ y)\prec z-x\succ(y\prec z)\Big)\\
&&+d_1\Big(x\succ(y\succ z)-(x\prec y)\succ z-(x\succ y)\succ z-(x\bullet
 y)\succ z\Big)\\
&&+ a_3\Big((x\prec y)\bullet z-x\bullet (y\succ z)\Big)\\
&&+b_3\Big((x\succ y)\bullet z+(x\bullet y)\prec z+(x\bullet y)\bullet
 z-x\succ(y\bullet z)-x\bullet (y\prec z)-x\bullet (y\bullet z)\Big),
\end{eqnarray*}
where $a_1,b_1,d_1,a_3,b_3\in \bfk$ can be arbitrary. Thus $r$ is in the subspace prescribed in Item~\mref{it:pna}, as needed.
\smallskip

\noindent
(\mref{it:pna} $\Rightarrow$ \mref{it:nap})
We check directly that all the relations in Eq.~(\mref{eq:wdna}) are satisfied by $(N,\prec_P,\succ_P,\bullet_P)$ for every
Nijenhuis algebra $(N,P)$. First of all
\begin{eqnarray*}
(x\prec_P y) \prec_P z&=&xP(y)P(z) \\
&=&xP(yP(z))+xP(P(y)z)-xP^2(yz)\\
&=&x\prec_P (y\prec_P z)+x\prec_P (y\succ_P z)
+x\prec_P (y\bullet_P z),
\end{eqnarray*}
proving the first equation in Eq.~(\mref{eq:wdna}). The proofs of the second and third equations are similar. For the fourth equation, we have
$$ (x\prec_P y)\bullet_P z=-P((xP(y))z)=-P(x(P(y)z))=x\bullet_P (y\succ_P z).$$
Finally for the last equation, we verify
\begin{eqnarray*}
&&(x\succ_P y)\bullet_P z+(x\bullet_P y)\prec_P z +(x\bullet_P y)\bullet_P z \\
&=& -P((P(x)y)z)-P(xy)P(z)+P(P(xy)z)\\
&=& -P(P(x)yz)-P(xyP(z))-P(P(xy)z)+P^2(xyz)+P(P(xy)z)\\
&=& -P(P(x)yz)-P(xyP(z))+P^2(xyz),
\end{eqnarray*}
and
\begin{eqnarray*}
&& x\succ_P (y\bullet_P z)+x\bullet_P(y\prec_P z) +x\bullet_P(y\bullet_P z)\\
&=& -P(x)P(yz) -P(x(yP(z))) +P(xP(yz))\\
&=& -P(xP(yz))-P(P(x)yz)+P^2(xyz)-P(xyP(z))+P(xP(yz))\\
&=& -P(P(x)yz)+P^2(xyz)-P(xyP(z)).
\end{eqnarray*}
So the two sides of the last equation agree.

Thus if the relation space $R$ of an operad $\calp=\calp(V)/(R)$ is contained in the subspace spanned by the vectors in Eq.~(\mref{eq:wdn}), then the corresponding relations are linear combinations of the equations in Eq.~(\mref{eq:wdna}) and hence are satisfied by $(N,\prec_P,\succ_P,\bullet_P)$ for each Nijenhuis algebra $(N,P)$. Therefore $(N,\prec_P,\succ_P,\bullet_P)$ is a $\calp$-algebra. This completes the proof of Theorem~\mref{thm:wdn}.

%\addcontentsline{toc}{section}{\numberline {}References}
%\bibliography{reference}

\end{document}